\newtheorem{theorem}{Theorem}[section]
\newtheorem{lemma}[theorem]{Lemma}
\newtheorem{proposition}[theorem]{Proposition}
\theoremstyle{definition}
\newtheorem{example}[theorem]{Example}
\theoremstyle{remark}
\numberwithin{equation}{section}
\def\alt{\operatorname{alt}}
\def\dalt{\operatorname{dalt}}
\def\adj{\operatorname{adj}}
\definecolor{lsupurple}{RGB}{70,29,124}
\definecolor{lsugold}{RGB}{253,208, 35}
\begin{document}

\title{Invariants for Turaev genus one links}

\author{Oliver T. Dasbach}
\address{Department of Mathematics\\
Louisiana State University\\
Baton Rouge, LA}
\email{kasten@math.lsu.edu}

\author{Adam M. Lowrance}
\address{Department of Mathematics\\
Vassar College\\
Poughkeepsie, NY} 
\email{adlowrance@vassar.edu}
\thanks{The first author is supported in part by NSF grant DMS-1317942\\
\indent The second author is supported by Simons Collaboration Grant for Mathematicians no. 355087.}

\subjclass{}
\date{}

\begin{abstract}
The Turaev genus defines a natural filtration on knots where Turaev genus zero knots are precisely the alternating knots.
We show that the signature of a Turaev genus one knot is determined by the number of components in its all-$A$ Kauffman state, the number of positive crossings, and its determinant. We also show that either the leading or trailing coefficient of the Jones polynomial of a Turaev genus one link (or an almost alternating link) has absolute value one. 

\end{abstract}

\maketitle

\section{Introduction}

Tait's flyping theorem, proven by Menasco and Thistlethwaite \cite{MenTh:Flype}, gives a classification of alternating links in terms of their alternating projections. Alternating links have a natural generalization by allowing alternating projections on surfaces other than the sphere.
For each link diagram, Turaev \cite{Turaev:SimpleProof, DFKLS:Jones} constructed a closed, orientable surface on which the link projects alternatingly.
The smallest genus among all Turaev surfaces of a given link is the {\it Turaev genus}, and links of Turaev genus zero are precisely the alternating links. The aim of this paper is to study two invariants for links of Turaev genus one: the signature and the Jones polynomial.

The signature $\sigma(K)$ of a knot $K$ was originally defined by Trotter \cite{Trotter:Signature}. Milnor \cite{Milnor:InfiniteCyclic} found an alternate definition of the signature of a knot using the infinite cyclic cover of the knot complement, and Erle \cite{Erle:Signature} proved that Trotter and Milnor's constructions are equivalent. Murasugi \cite{Murasugi:Signature} extended the definition of signature to links of more than one component and showed that signature gives lower bounds on the slice genus and unknotting number of a knot. Kauffman and Taylor \cite{KauffmanTaylor:Signature} showed that the signature of a link is a concordance invariant.

The signature of a link $L$ can be defined as the signature of a quadratic form associated to a Seifert surface of $L$, i.e. an oriented surface whose boundary is $L$. Gordon and Litherland \cite{GordonLitherland:Signature} showed how to compute the signature of a knot from a quadratic form associated to the (possibly non-orientable) checkerboard surfaces of a diagram of $L$. Traczyk \cite{Traczyk:Signature} used the Gordon-Litherland formulation of the signature to compute the signature of non-split alternating links. Suppose that $L$ is a non-split alternating link with alternating diagram $D$. Let $s_A(D)$ and $s_B(D)$ denote the number of components in the all-$A$ and all-$B$ state of $D$, as in Figure \ref{figure:resolution}. Also, let $c_+(D)$ and $c_-(D)$ denote the number of positive and negative crossings in $D$, as in Figure \ref{figure:crossingsign}. Then
\begin{equation}
\label{equation:signaturealternating}
\sigma(L) = s_A(D) - c_+(D) -1 = -s_B(D) + c_-(D) + 1.
\end{equation}

Lee \cite{Lee:KhovanovAlternating} used Equation \ref{equation:signaturealternating} to prove that the reduced Khovanov homology of a non-split alternating link $L$ is supported entirely in the $\delta$-grading of $-\sigma(L)/2$.  Rasmussen \cite{Rasmussen:Slice} defined a concordance invariant $s$ from Khovanov homology and used Lee's result to show that if $K$ is an alternating knot, then $s(K)=-\sigma(K)$. In a similar vein, Ozsv\'ath and Szab\'o \cite{OS:Alternating} showed that the knot Floer homology $\widehat{HFK}(K)$ of an alternating knot $K$ is supported in the $\delta$-grading of $-\sigma(K)/2$. The $\tau$-invariant is a concordance invariant arising from the Heegaard Floer package, and Ozsv\'ath and Szab\'o \cite{OS:Tau} showed that if $K$ is alternating, then $\tau(K) = -\sigma(K)/2$.

In \cite{DasLow:TuraevConcordance} the authors investigated the relationship between the signature of a knot and the maximum and minimum $\delta$-gradings in Khovanov and knot Floer homology. We showed that for any diagram $D$ of a knot $K$, the following inequality holds:
\begin{equation}
\label{inequality:TuraevSignature}
s_A(D) - c_+(D) -1 \leq \sigma(K) \leq -s_B(D) + c_-(D) + 1.
\end{equation}

\begin{figure}[h]
$$\begin{tikzpicture}[scale=.8, >=triangle 45]
\draw[->] (-1,-1) -- (1,1);
\draw[->] (-.25,.25) -- (-1,1);
\draw (.25,-.25) -- (1,-1);
\draw (0,-1.5) node{\Large{$+$}};

\begin{scope}[xshift = 5cm]
\draw [->] (1,-1) -- (-1,1);
\draw [->] (.25,.25) -- (1,1);
\draw (-.25,-.25) --(-1,-1);
\draw (0,-1.5) node{\Large{$-$}};
\end{scope}
\end{tikzpicture}$$
\caption{Positive and negative crossings in a link diagram.}
\label{figure:crossingsign}
\end{figure}
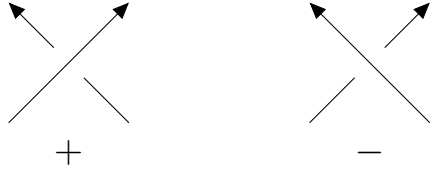

Define the determinant $\det L$ of the link $L$ by $\det L =|\Delta_L(-1)|$ where $\Delta_L(t)$ is the Alexander polynomial of $L$. 
The Turaev surface $F(D)$ of a link diagram $D$ is a closed, oriented surface whose construction is given in Section \ref{section:Turaev}. The genus of the Turaev surface of $D$ is zero if and only if $D$ is the connected sum of alternating diagrams (in which case the associated link is alternating).  
The first main theorem of this article gives a formula for the signature of a knot with a diagram whose Turaev surface has genus one.
\begin{theorem}
\label{theorem:mainsignature}
Let $K$ be a knot with diagram $D$ whose Turaev surface has genus one. The signature of $K$ is determined by
$$\sigma(K) = s_A(D) - c_+(D) \pm 1~\text{and}~\sigma(K) \equiv \det(K) -1 \mod 4.$$
\end{theorem}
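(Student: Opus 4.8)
The plan is to combine the two-sided bound of Inequality~\ref{inequality:TuraevSignature} with the genus-one hypothesis to trap $\sigma(K)$ in an interval of length $2$, and then to use parity and a mod-$4$ constraint to pin down the exact value. The first step is to recall the Euler-characteristic formula for the Turaev surface of a connected diagram, namely $2 - 2g(F(D)) = s_A(D) + s_B(D) - c(D)$, so that the genus-one hypothesis is equivalent to $s_A(D) + s_B(D) = c(D)$. Substituting $s_B(D) = c(D) - s_A(D)$ into the upper bound of Inequality~\ref{inequality:TuraevSignature} rewrites it as $-s_B(D) + c_-(D) + 1 = s_A(D) - c_+(D) + 1$, so the inequality collapses to
$$s_A(D) - c_+(D) - 1 \le \sigma(K) \le s_A(D) - c_+(D) + 1.$$
Thus $\sigma(K)$ is one of the three consecutive integers $s_A(D) - c_+(D) - 1$, $s_A(D) - c_+(D)$, $s_A(D) - c_+(D) + 1$; this is exactly the observation that the width of the bracket in Inequality~\ref{inequality:TuraevSignature} equals $2g(F(D))$.

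To excise the middle value I would prove the parity statement that $s_A(D) - c_+(D)$ is odd for any diagram of a knot. The clean way is to compare the all-$A$ state with the oriented (Seifert) resolution: these two states agree at every positive crossing and differ at every negative crossing, so passing from one to the other is a sequence of $c_-(D)$ smoothing changes, each of which alters the number of loops by one. Hence $s(D) \equiv s_A(D) + c_-(D) \pmod 2$, where $s(D)$ is the number of Seifert circles. Since the Seifert surface of a connected knot diagram has $\chi = s(D) - c(D) = 1 - 2g$, one has $s(D) \equiv c(D) + 1 \pmod 2$, and combining the two congruences gives $s_A(D) - c_+(D) \equiv 1 \pmod 2$. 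Because the signature of a knot is even, $\sigma(K)$ cannot equal the odd middle value $s_A(D) - c_+(D)$, which already yields $\sigma(K) = s_A(D) - c_+(D) \pm 1$.

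It remains to determine the sign, and for this I would invoke the classical congruence $\sigma(K) \equiv \det(K) - 1 \pmod 4$, valid for every knot. The two candidate values $s_A(D) - c_+(D) \pm 1$ differ by $2$ and therefore lie in distinct residue classes modulo $4$; since $\det(K)$ is odd, $\det(K) - 1$ is congruent to either $0$ or $2$ modulo $4$, and the congruence selects precisely one of the two candidates. This simultaneously fixes the sign and supplies the second displayed relation in the statement. To justify the congruence itself I would work with a Seifert matrix $V$, using that $V - V^{T}$ is the unimodular skew form on the Seifert surface while $Q = V + V^{T}$ is the symmetrized form with $\sigma(K) = \sigma(Q)$ and $\det(K) = |\det Q|$; the sign of $\det Q$ is $(-1)^{g - \sigma(K)/2}$, and the finer reduction of $\det Q$ modulo $4$ is what must be related to $\sigma(K)$.

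The main obstacle is the mod-$4$ congruence. The interval-squeezing step is immediate once Inequality~\ref{inequality:TuraevSignature} is in hand, and the parity lemma is a short combinatorial argument, but the relation $\sigma(K) \equiv \det(K) - 1 \pmod 4$ carries the actual arithmetic content that resolves the ambiguity: it is not a formal consequence of the bounds, and it fails for arbitrary integral symmetric forms, for instance the identity form of rank $2$, where $\sigma = 2$ and $\det - 1 = 0$. I would therefore either cite the classical signature--determinant relation or prove it directly from the symplectic structure $V - V^{T} = J$, and I expect this mod-$4$ analysis of $\det(V + V^{T})$ to be the most delicate part of the argument. One should also check that the diagram may be taken connected (otherwise one treats each connected summand separately), so that the Turaev-genus and Seifert-circle formulas apply verbatim.
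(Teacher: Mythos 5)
Your proposal is correct and its skeleton is the same as the paper's: use the genus formula $2-2g_T(D)=s_A(D)+s_B(D)-c(D)$ to make the two ends of Inequality \ref{inequality:TuraevSig} coincide up to the summand $\pm1$, rule out the middle value by a parity argument, and then select the sign via $\sigma(K)\equiv\det(K)-1\pmod 4$. The two places you diverge are at the lemma level. For parity, the paper observes that $s_A(D_{\alt})-c_+(D_{\alt})-1$ is even for an alternating knot diagram by Traczyk's formula (Equation \ref{equation:signaturealternating}) and that a crossing change preserves the parity of $s_A(D)-c_+(D)$; your route through the Seifert state (the oriented resolution agrees with the all-$A$ resolution exactly at positive crossings, so $s(D)\equiv s_A(D)+c_-(D)\pmod 2$, combined with $s(D)\equiv c(D)+1\pmod 2$) is equally valid and has the mild advantage of not invoking Traczyk's theorem or the evenness of the signature of alternating knots. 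For the mod-$4$ congruence, the paper derives it by combining Giller's relation $\operatorname{sign}\Delta_K(-1)=(-1)^{\sigma(K)/2}$ (Theorem \ref{theorem:SigAlex}(3)) with the elementary fact $\Delta_K(-1)\equiv 1\pmod 4$ coming from the symmetry of the Alexander polynomial; this is exactly the ``classical signature--determinant relation'' you propose to cite, and your Seifert-matrix sketch (sign of $\det(V+V^T)$ determined by the signature, value of $\det(V+V^T)$ determined mod $4$ by $\Delta_K(-1)\equiv 1$) fills in correctly, so you have rightly identified that as the only step with real arithmetic content. Your worry about connectedness is vacuous here, since a knot diagram is automatically connected and non-split.
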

The two conditions in Theorem \ref{theorem:mainsignature} determine the signature of $K$ because the determinant of a knot is always odd and its signature is always even. In Section \ref{section:signature}, we give a formulation of Theorem \ref{theorem:mainsignature} for links. 

An $n$-tangle $R$ is an embedding of $n$ arcs and $m$ circles into a $3$-ball for $n>0$ and $m\geq 0$. An $n$-tangle diagram is a regular projection of $R$ inside of a round circle with only transverse double points, and an $n$-tangle is called {\em alternating} if it has an alternating diagram. The intersections of the $n$-strands of $R$ with the boundary circle are decorated with $+$ and $-$ signs according to whether the first crossing in $R$ involving that strand is an over-crossing or an under-crossing. A face of a tangle diagram is a connected component of the projection disk minus the boundary circle union the tangle projection. A tangle diagram is called {\em proper} if no face is incident to two or more different arcs in the boundary circle. If a tangle diagram is proper and alternating, then the $+$ and the $-$ decorations must alternate around the boundary circle.
  
Armond and Lowrance \cite{ArmLow:Turaev} and independently Kim \cite{Kim:TuraevClassification} classified link diagrams whose Turaev surface is genus one. Every non-split link of Turaev genus one has a diagram obtained by arranging an even number of proper alternating $2$-tangles into a circle as in Figure \ref{figure:genus1}. Examples of Turaev genus one links include pretzel links and Montesinos links. See Subsection \ref{subsection:altdecomp} for a detailed treatment of this result. 
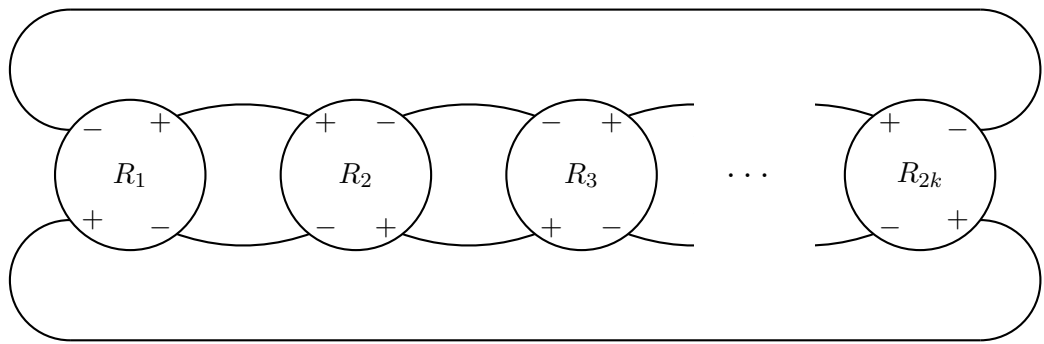
\begin{figure}[h]
$$\begin{tikzpicture}[thick]
\draw [bend left] (0,.5) edge (3,.5);
\draw [bend right] (0,-.5) edge (3, -.5);
\draw [bend left] (3,.5) edge (6,.5);
\draw [bend right] (3,-.5) edge (6, -.5);
\draw [bend left] (6,.5) edge (9,.5);
\draw [bend right] (6,-.5) edge (9, -.5);
\draw [bend left] (7.5,.5) edge (10.5,.5);
\draw [bend right] (7.5,-.5) edge (10.5,-.5);

\fill[white] (7.5,1) rectangle (9.1,-1);
\draw (8.25,0) node{\Large{$\dots$}};

\draw (-.8,.6) arc (270:90:.8cm);
\draw (-.8,-.6) arc (90:270:.8cm);
\draw (11.3,.6) arc (-90:90:.8cm);
\draw (11.3,-.6) arc (90:-90:.8cm);
\draw (-.8,2.2) -- (11.3,2.2);
\draw (-.8,-2.2) -- (11.3,-2.2);

\fill[white] (0,0) circle (1cm);
\draw (0,0) node {$R_1$};
\draw (0,0) circle (1cm);
\fill[white] (3,0) circle (1cm);
\draw (3,0) node {$R_2$};
\draw (3,0) circle (1cm);
\fill[white] (6,0) circle (1cm);
\draw (6,0) node {$R_3$};
\draw (6,0) circle (1cm);
\fill[white] (10.5,0) circle (1cm);
\draw (10.5,0) node {$R_{2k}$};
\draw (10.5,0) circle (1cm);

\draw (-.5,.6) node{$-$};
\draw (-.5,-.6) node{$+$};
\draw (.4,.7) node{$+$};
\draw (.4,-.7) node{$-$};

\draw (2.6,.7) node{$+$};
\draw (2.6,-.7) node{$-$};
\draw (3.4, .7) node{$-$};
\draw (3.4,-.7) node{$+$};

\draw (5.6, .7) node{$-$};
\draw (5.6,-.7) node{$+$};
\draw (6.4,.7) node{$+$};
\draw (6.4,-.7) node{$-$};

\draw (10.1, .7) node{$+$};
\draw (10.1,-.7) node{$-$};
\draw (11,.6) node{$-$};
\draw (11,-.6) node{$+$};

\end{tikzpicture}$$
\caption{Every non-split link of Turaev genus one has a diagram as above. Each $R_i$ is a proper alternating tangle.}
\label{figure:genus1}
\end{figure}

The endpoints of a $2$-tangle $R$ can be connected in two different ways to form a link. If the two northern endpoints are joined and the two southern endpoints are joined, then the resulting link $N(R)$ is called the {\em numerator} of $R$. If the two eastern endpoints are joined and the two western endpoints are joined, then the resulting link $D(R)$ is called the {\em denominator} of $R$. See Figure \ref{figure:tangleclosures}. 
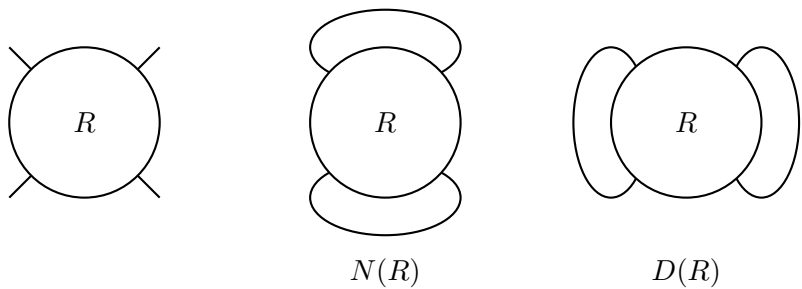
\begin{figure}[h]
$$\begin{tikzpicture}[thick]

\draw (0,0) -- (2,2);
\draw (0,2) -- (2,0);
\fill[white] (1,1) circle (1cm);
\draw (1,1) circle (1cm);
\draw (1,1) node{$R$};

\begin{scope}[xshift=4cm]

\draw (1,2) ellipse (1 cm and .5cm);
\draw (1,0) ellipse (1 cm and .5cm);

\fill[white] (1,1) circle (1cm);
\draw (1,1) circle (1cm);
\draw (1,1) node{$R$};

\draw (1,-1) node {$N(R)$};

\end{scope}

\begin{scope}[xshift=8cm]

\draw (0,1) ellipse (.5 cm and 1 cm);
\draw (2,1) ellipse (.5 cm and 1 cm);

\fill[white] (1,1) circle (1cm);
\draw (1,1) circle (1cm);
\draw (1,1) node{$R$};

\draw (1,-1) node {$D(R)$};

\end{scope}

\end{tikzpicture}$$
\caption{The tangle $R$, its numerator closure $N(R)$, and its denominator closure $D(R)$.}
\label{figure:tangleclosures}
\end{figure}

An orientation of a Turaev  genus one link $L$ yields a direction on each of the edges in the diagram $D$ of Figure \ref{figure:genus1}. The orientation of the strands of $R_i$ inside $D$ is the same as the orientation of the strands of $R_i$ inside either $N(R_i)$ or $D(R_i)$ (or both). Since each $2$-tangle $R_i$ has two incoming edges and two outgoing edges, it follows that the orientation of $R_i$ agrees with the orientation of $N(R_i)$ for each $i=1,\dots, 2k$, or the orientation of $R_i$ agrees with the orientation of $D(R_i)$ for each $i=1,\dots, 2k$. In the first case, we say $D$ has the {\em numerator orientation}, and in the second case, we say $D$ has the {\em denominator orientation}.
\begin{theorem}
\label{theorem:signature2}
Let $L$ be a link with Turaev genus one diagram $D$ as in Figure \ref{figure:genus1}. If $D$ has the numerator orientation, then 
$$\sigma(L) = \pm 1 + \sum_{i=1}^{2k} \sigma(N(R_i)).$$
If $D$ has the denominator orientation, then
$$\sigma(L) = \pm 1 + \sum_{i=1}^{2k} \sigma(D(R_i)).$$
\end{theorem}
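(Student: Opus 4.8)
The plan is to reduce the computation of $\sigma(L)$ to Traczyk's formula (Equation \ref{equation:signaturealternating}) applied to the alternating closures $N(R_i)$, assembled by means of the link version of Theorem \ref{theorem:mainsignature}. I will carry out the numerator orientation in detail; the denominator orientation follows by the symmetric argument with $N(R_i)$ replaced by $D(R_i)$. Fix the numerator orientation. The crossings of $D$ are exactly the crossings of the tangles $R_1,\dots,R_{2k}$, and since in the numerator orientation the orientation of each $R_i$ inside $D$ agrees with its orientation inside $N(R_i)$, all crossing signs are preserved; hence
$$c_+(D)=\sum_{i=1}^{2k}c_+(N(R_i)).$$

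The heart of the argument is the combinatorial identity $s_A(D)=\sum_{i=1}^{2k}s_A(N(R_i))-2k$. Granting this, the link version of Theorem \ref{theorem:mainsignature} established in Section \ref{section:signature}, namely $\sigma(L)=s_A(D)-c_+(D)\pm 1$, together with Traczyk's formula $\sigma(N(R_i))=s_A(N(R_i))-c_+(N(R_i))-1$ for each alternating link $N(R_i)$, yields
$$\sigma(L)=s_A(D)-c_+(D)\pm1=\sum_{i=1}^{2k}\big(s_A(N(R_i))-c_+(N(R_i))-1\big)\pm1=\pm1+\sum_{i=1}^{2k}\sigma(N(R_i)),$$
where the middle equality substitutes the two displayed relations and distributes $-2k=\sum_{i=1}^{2k}(-1)$ across the sum. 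This is exactly the asserted formula.

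To prove the combinatorial identity, observe that smoothing every crossing of a $2$-tangle $R_i$ in the $A$-direction produces a crossingless tangle, so the all-$A$ state of $R_i$ joins its four endpoints either \emph{horizontally} (the two northern endpoints to each other and the two southern endpoints to each other) or \emph{vertically} (the two western endpoints to each other and the two eastern endpoints to each other), together with some number $\alpha_i$ of interior circles. A direct check of the four possibilities shows that a horizontal tangle contributes $s_A(N(R_i))=\alpha_i+2$ while a vertical tangle contributes $\alpha_i+1$ (and the reverse for $D(R_i)$). I then claim that the connection types alternate around the cycle: since each $R_i$ is a proper alternating tangle, its all-$A$ connection type is governed by its boundary decorations, and the decorations of consecutive tangles in Figure \ref{figure:genus1} are complementary, so consecutive tangles receive opposite types. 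Thus exactly $k$ tangles are horizontal and $k$ are vertical, giving $\sum_{i=1}^{2k}s_A(N(R_i))=\sum_i\alpha_i+3k$. Threading the horizontal and vertical endpoint-identifications through the external edges of Figure \ref{figure:genus1} (which cyclically join the two eastern endpoints of $R_i$ to the two western endpoints of $R_{i+1}$) and counting the resulting loops shows that the endpoint arcs close up into exactly $k$ circles, so $s_A(D)=k+\sum_i\alpha_i$; comparing the two counts gives the identity.

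The main obstacle is precisely this combinatorial identity, and within it the claim that the all-$A$ connection type of a proper alternating tangle is determined by its boundary decorations so that the types must alternate around the cycle. I expect to prove the type claim by relating the all-$A$ smoothing at the first crossing along each boundary strand to the $+/-$ decoration and to the checkerboard coloring of the alternating tangle, drawing on the structure of Turaev genus one diagrams developed in Subsection \ref{subsection:altdecomp}; once the types alternate, the global circle count is the bookkeeping sketched above. Two minor points remain. First, Traczyk's formula requires each $N(R_i)$ to be a non-split alternating link, which follows from properness of the tangles, with any split closures absorbed by the usual additive correction. Second, the denominator orientation is obtained verbatim after interchanging $N(R_i)$ with $D(R_i)$ and horizontal with vertical, the analogous identity being $s_A(D)=\sum_{i=1}^{2k}s_A(D(R_i))-2k$.
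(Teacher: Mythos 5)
Your proposal follows essentially the same route as the paper's proof: reduce to $\sigma(L)=s_A(D)-c_+(D)\pm 1$ via Proposition \ref{proposition:TuraevSig} in the genus one case, apply Traczyk's formula to each alternating closure $N(R_i)$, use $c_+(D)=\sum c_+(N(R_i))$, and establish the key identity $s_A(D)=\sum s_A(N(R_i))-2k$ by observing that the all-$A$ boundary connectivity of the tangles alternates between the two planar types around the cycle (the paper phrases this as $s_A(N(R_i))=s_A^{\mathrm{int}}(N(R_i))+1$ or $+2$ according to the parity of $i$, together with $s_A(D)=s_A^{\mathrm{int}}(D)+k$). Your horizontal/vertical bookkeeping and the alternation claim you flag as the remaining work are exactly the content the paper asserts from the alternating $+/-$ decorations in Figure \ref{figure:genus1}, so the argument is correct and matches the paper's.
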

As in Theorem \ref{theorem:mainsignature}, if $K$ is a knot, then its signature is determined by Theorem \ref{theorem:signature2} and the fact that $\sigma(K) \equiv \det (K) -1 \mod 4$.

The Jones polynomial \cite{Jones:Polynomial} has been wildly successful at answering difficult questions about diagrammatic properties of knots and links. The first major success of this kind was the proof by Kauffman \cite{Kauffman:Bracket}, Murasugi \cite{Murasugi:JonesConjectures}, and Thistlethwaite \cite{Thistlethwaite:JonesBreadth} that an alternating diagram of a link with no nugatory crossings has the fewest possible number of crossings. Kauffman \cite{Kauffman:Bracket} also proved that if a link is alternating, then the first and last coefficients of the Jones polynomial have absolute value one. Lickorish and Thistlethwaite \cite{LickorishThistlethwaite:Adequate} extended Kauffman's result to the class of adequate links. In our last main result of the paper, we prove a similar result about the Jones polynomial of almost alternating links and links of Turaev genus one. Adams et. al. \cite{Adams:AlmostAlternating} define a link $L$ to be {\em almost alternating} if $L$ is non-alternating, but has a diagram $D$ such that one crossing change transforms $D$ into an alternating diagram. All almost alternating links are Turaev genus one, but it remains an open question whether there exists a Turaev genus one link that is not almost alternating.
\begin{theorem}
\label{theorem:AlmostAlternating}
Let $L$ be an almost alternating link or a link of Turaev genus one with Jones polynomial
$$V_L(t) = a_m t^m + a_{m+1} t^{m+1} +\cdots a_{M-1} t^{M-1} + a_M t^M,$$
where $a_m$ and $a_M$ are nonzero. Either $|a_m| = 1$ or $|a_M|=1$ (or both).
\end{theorem}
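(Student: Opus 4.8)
The plan is to reduce the statement to the \emph{adequacy} of a suitable diagram and then to establish that adequacy on at least one side. Write the Kauffman bracket as the state sum $\langle D\rangle=\sum_{S}A^{a(S)-b(S)}(-A^2-A^{-2})^{|S|-1}$, the sum being over all Kauffman states $S$, where $a(S)$ and $b(S)$ count the $A$- and $B$-smoothings of $S$ and $|S|$ counts its state circles. The Jones polynomial $V_L(t)$ is recovered from $\langle D\rangle$ by the writhe normalization together with $A=t^{-1/4}$, so the two extreme coefficients of $V_L$ equal, up to sign, the two extreme coefficients of $\langle D\rangle$. Let $c$ denote the number of crossings of $D$, let $G_A(D)$ be the graph whose vertices are the all-$A$ state circles and whose edges are the crossings (each crossing joining the two circles it touches), and define $G_B(D)$ dually. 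Call $D$ \emph{$A$-adequate} if $G_A(D)$ has no loop and \emph{$B$-adequate} if $G_B(D)$ has no loop.

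The computation of Lickorish and Thistlethwaite \cite{LickorishThistlethwaite:Adequate} shows that the highest $A$-power appearing in $\langle D\rangle$ is at most $c+2(s_A(D)-1)$, with equality and coefficient $\pm 1$ whenever $D$ is $A$-adequate, and dually that the lowest $A$-power is at least $-c-2(s_B(D)-1)$, attained with coefficient $\pm1$ whenever $D$ is $B$-adequate. Under $A=t^{-1/4}$ the highest $A$-power corresponds to the lowest power of $t$, so $A$-adequacy forces $|a_m|=1$ and $B$-adequacy forces $|a_M|=1$. Thus \textbf{it suffices to produce, for each link in the statement, a diagram that is $A$-adequate or $B$-adequate.} Since the Turaev genus is a link invariant and, as noted above, every almost alternating link has Turaev genus one, both families consist of links of Turaev genus one; by the Armond--Lowrance and Kim classification \cite{ArmLow:Turaev, Kim:TuraevClassification} every non-split such link admits a diagram $D$ of the form in Figure \ref{figure:genus1}, and since $V_L$ may be computed from any diagram it is enough to treat this standard diagram (the split case reducing to its non-split factors).

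To analyze $D$ I would examine the all-$A$ and all-$B$ states tangle by tangle. A loop of $G_A(D)$ is a crossing both of whose $A$-arcs lie on one all-$A$ state circle. No crossing \emph{interior} to a tangle $R_i$ can produce such a loop: on $R_i$ the all-$A$ smoothing is the state carried by one checkerboard colour of the reduced alternating diagram $R_i$, and a loop there would force a nugatory crossing, which a reduced proper alternating tangle does not possess. Hence every loop must be created globally, when the boundary arcs of the tangle states are threaded together by the cyclic pattern of Figure \ref{figure:genus1} into a single circle that returns to a crossing it has already visited. I would record, for each $R_i$, how the all-$A$ (respectively all-$B$) state pairs its four boundary points, and then trace these pairings around the cycle. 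Using that the $+/-$ decorations alternate around each $R_i$, that they are matched across every junction, and that the number of tangles $2k$ is even, I expect to show that exactly one of the two global states threads the boundary arcs so as to close up a loop, while in the other state the same arcs are separated onto two distinct circles; that state is then adequate and yields the desired extreme coefficient.

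The main obstacle is precisely this last step: rigorously tracking how the all-$A$ (respectively all-$B$) state circles of the individual alternating tangles merge once the tangles are plumbed into the cycle, and proving that a self-loop can appear in \emph{at most one} of $G_A(D)$ and $G_B(D)$. This is where the hypotheses that each $R_i$ is proper and alternating, that the decorations alternate and match across junctions, and that there is an even number of tangles are all needed, and it is also the feature distinguishing Turaev genus one from higher genus, where both state graphs may fail to be loopless. As an independent cross-check, the almost alternating case can be argued directly: changing the dealternator $c_0$ of an almost alternating diagram $D$ gives a reduced alternating, hence adequate, diagram $D'$, and since changing a crossing merely swaps the two smoothings of $c_0$, one verifies that this single swap can destroy adequacy on at most one side, the other side being inherited from $D'$.
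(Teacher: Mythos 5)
Your reduction of the theorem to semi-adequacy is a genuinely different strategy from the paper's, but it has a gap that I do not think can be repaired along the lines you sketch. The step you yourself flag as ``the main obstacle'' is false as stated: it is not true that at most one of $G_A(D)$, $G_B(D)$ acquires a loop for the diagrams under consideration. For an almost alternating diagram $D$ with dealternator $c_0$, the all-$A$ state of $D$ agrees with the all-$A$ state of the reduced alternating diagram $D'$ except that the smoothing at $c_0$ is switched; since $D'$ is reduced alternating, hence adequate, the trace at $c_0$ joins two \emph{distinct} circles of the all-$A$ state of $D'$, so switching that one smoothing merges them and turns $c_0$ into a loop of $G_A(D)$. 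The identical argument applies to the all-$B$ state. Hence every almost alternating diagram fails to be $A$-adequate \emph{and} fails to be $B$-adequate, which is the opposite of your closing ``cross-check'' that the swap at $c_0$ destroys adequacy on at most one side. More importantly, the conclusion of the theorem is strictly weaker than semi-adequacy and holds for reasons adequacy cannot detect: in the paper's Lemma \ref{lemma:penultimate} the top potential degree of $\langle D\rangle$ has coefficient $0$ by cancellation between $A\langle D(R)\rangle$ and $A^{-1}\langle N(R)\rangle$, and the true extreme coefficients are $\pm(1-\adj(u_1,u_2))$ and $\pm(1-\adj(v_1,v_2))$; when, say, $\adj(u_1,u_2)=2$ the leading coefficient is $\mp1$ even though $G_A(D)$ has a loop, so the $\pm1$ arises from a subleading term that no adequacy argument sees. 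To carry out your plan you would therefore need to produce, for every almost alternating or Turaev genus one link, some \emph{other} diagram that is $A$- or $B$-adequate; that is a strictly stronger assertion (essentially the question of whether such links are semi-adequate) and nothing in the proposal constructs such a diagram.

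For comparison, the paper works with a crossing-minimal almost alternating diagram, expands $\langle D\rangle = A\langle D(R)\rangle + A^{-1}\langle N(R)\rangle$ via the Dasbach--Lin formula for alternating diagrams, records the cancellation of the top term, and then uses a planar-duality argument on the checkerboard graphs to show that $\adj(u_1,u_2)\geq 3$ forces $\adj(v_1,v_2)=0$ (and symmetrically), leaving only the case $\adj(u_1,u_2)=\adj(v_1,v_2)=1$, which is excluded by an isotopy removing two crossings and contradicting minimality; the Turaev genus one case then follows because such links are mutants of almost alternating links and mutation preserves the Jones polynomial. Your proposal has no analogue of the minimality/reduction step or of the mutation reduction, and both are essential to the argument.
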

Theorem \ref{theorem:AlmostAlternating} provides a computable obstruction to a link being almost alternating or having Turaev genus one. Among the knots with twelve or fewer crossings listed in KnotInfo \cite{KnotInfo}, there are 35 unknown values for Turaev genus. This obstruction shows that 12 of these knots cannot be almost alternating or Turaev genus one. 

This paper is organized as follows. Section \ref{section:Turaev} contains the construction of the Turaev surface. In Section \ref{section:signature}, we prove Theorems \ref{theorem:mainsignature} and \ref{theorem:signature2}. Finally in Section \ref{section:Jones}, we prove Theorem \ref{theorem:AlmostAlternating} and use it to prove that a collection of knots have Turaev genus at least two.

\section{The Turaev surface}
\label{section:Turaev}

In this section, we discuss the Turaev surface of a link diagram, the Turaev genus of a link, and connections between the Turaev surface, Turaev genus, and other knot and link invariants. Champanerkar and Kofman \cite{CK:Survey} provide an excellent recent survey article on this topic.

\subsection{Construction of the Turaev surface}

Each crossing of a link diagram $D$ has an $A$-resolution and a $B$-resolution, as depicted in Figure \ref{figure:resolution}. A {\em state} of $D$ is the set of curves obtained by performing either an $A$-resolution or a $B$-resolution for each crossing. The all-$A$ state (or all-$B$ state) is the state obtained by performing an $A$-resolution (or a $B$-resolution) for every crossing in $D$. Let $s_A(D)$ and $s_B(D)$ denote the number of components in the all-$A$ and all-$B$ states of $D$ respectively. The {\em trace} of each resolution is a small line segment connecting the two arcs of the resolution. 
\begin{figure}[h]
$$\begin{tikzpicture}[>=stealth, scale=.8]
\draw (-1,-1) -- (1,1);
\draw (-1,1) -- (-.25,.25);
\draw (.25,-.25) -- (1,-1);
\draw (-3,0) node[above]{\Large{$A$}};
\draw[->,very thick] (-2,0) -- (-4,0);
\draw (3,0) node[above]{\Large{$B$}};
\draw[->,very thick] (2,0) -- (4,0);
\draw (-5,1) arc (120:240:1.1547cm);
\draw (-7,-1) arc (-60:60:1.1547cm);
\draw (5,1) arc (210:330:1.1547cm);
\draw (7,-1) arc (30:150:1.1547cm);
\draw[blue,very thick] (-5.57735,0) -- (-6.4226,0);
\draw[red,very thick] (6,0.422625) -- (6,-0.422625);
\end{tikzpicture}$$
\caption{The resolutions of a crossing and their traces in a link diagram.}
\label{figure:resolution}
\end{figure}
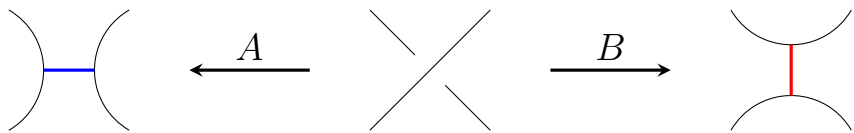

The Turaev surface $F(D)$ of a link diagram $D$ is constructed as follows. The diagram $D$ is embedded on the projection sphere $S^2$. Embed the all-$A$ and all-$B$ states of $D$ in a neighborhood of the projection sphere, but on opposite sides. To construct $F(D)$ we first take a cobordism between the all-$A$ state and the all-$B$ state such that the cobordism consists of bands away from the crossings of $D$ and saddles in neighborhoods of crossings (as in Figure \ref{figure:saddle}). We cap off all boundary components of the cobordism with disks to complete the construction of $F(D)$.
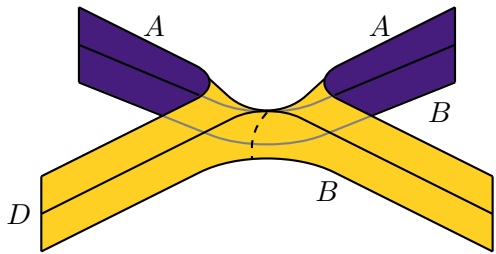
\begin{figure}[h]
$$\begin{tikzpicture}
\begin{scope}[thick]
\draw [rounded corners = 10mm] (0,0) -- (3,1.5) -- (6,0);
\draw (0,0) -- (0,1);
\draw (6,0) -- (6,1);
\draw [rounded corners = 5mm] (0,1) -- (2.5, 2.25) -- (0.5, 3.25);
\draw [rounded corners = 5mm] (6,1) -- (3.5, 2.25) -- (5.5,3.25);
\draw [rounded corners = 5mm] (0,.5) -- (3,2) -- (6,.5);
\draw [rounded corners = 7mm] (2.23, 2.3) -- (3,1.6) -- (3.77,2.3);
\draw (0.5,3.25) -- (0.5, 2.25);
\draw (5.5,3.25) -- (5.5, 2.25);
\end{scope}

\begin{pgfonlayer}{background2}
\fill [lsugold]  [rounded corners = 10 mm] (0,0) -- (3,1.5) -- (6,0) -- (6,1) -- (3,2) -- (0,1); 
\fill [lsugold] (6,0) -- (6,1) -- (3.9,2.05) -- (4,1);
\fill [lsugold] (0,0) -- (0,1) -- (2.1,2.05) -- (2,1);
\fill [lsugold] (2.23,2.28) --(3.77,2.28) -- (3.77,1.5) -- (2.23,1.5);

\fill [white, rounded corners = 7mm] (2.23,2.3) -- (3,1.6) -- (3.77,2.3);
\fill [lsugold] (2,2) -- (2.3,2.21) -- (2.2, 1.5) -- (2,1.5);
\fill [lsugold] (4,2) -- (3.7, 2.21) -- (3.8,1.5) -- (4,1.5);
\end{pgfonlayer}

\begin{pgfonlayer}{background4}
\fill [lsupurple] (.5,3.25) -- (.5,2.25) -- (3,1.25) -- (2.4,2.2);
\fill [rounded corners = 5mm, lsupurple] (0.5,3.25) -- (2.5,2.25) -- (2,2);
\fill [lsupurple] (5.5,3.25) -- (5.5,2.25) -- (3,1.25) -- (3.6,2.2);
\fill [rounded corners = 5mm, lsupurple] (5.5, 3.25) -- (3.5,2.25) -- (4,2);
\end{pgfonlayer}

\draw [thick] (0.5,2.25) -- (1.6,1.81);
\draw [thick] (5.5,2.25) -- (4.4,1.81);
\draw [thick] (0.5,2.75) -- (2.1,2.08);
\draw [thick] (5.5,2.75) -- (3.9,2.08);

\begin{pgfonlayer}{background}
\draw [black!50!white, rounded corners = 8mm, thick] (0.5, 2.25) -- (3,1.25) -- (5.5,2.25);
\draw [black!50!white, rounded corners = 7mm, thick] (2.13,2.07) -- (3,1.7)  -- (3.87,2.07);
\end{pgfonlayer}
\draw [thick, dashed, rounded corners = 2mm] (3,1.85) -- (2.8,1.6) -- (2.8,1.24);
\draw (0,0.5) node[left]{$D$};
\draw (1.5,3) node{$A$};
\draw (4.5,3) node{$A$};
\draw (3.8,.8) node{$B$};
\draw (5.3, 1.85) node{$B$};
\end{tikzpicture}$$
\caption{In a neighborhood of each crossing of $D$ a saddle surface transitions between the all-$A$ and all-$B$ states.}
\label{figure:saddle}
\end{figure}

The genus of the Turaev surface $F(D)$ of $D$ is denoted by $g_T(D)$ and is given by
\begin{equation}
\label{equation:g_T(D)}
g_T(D) = \frac{1}{2}\left(2 + c(D) - s_A(D) - s_B(D)\right),
\end{equation}
where $c(D)$ is the number of crossings in $D$. The {\em Turaev genus} $g_T(L)$ of the link $L$ is given by
$$g_T(L) = \min \{ g_T(D)~|~D~\text{is a diagram of}~L\}.$$

Dasbach, Futer, Kalfagianni, Lin, and Stoltzfus \cite{DFKLS:Determinant} showed how to compute the determinant of a link using a certain graph embedded on the Turaev surface, and they \cite{DFKLS:Jones} also showed that the Jones polynomial of the link is an evaluation of the Bollob\'as-Riordian-Tutte polynomial of that embedded graph. Champanerkar, Kofman, and Stoltzfus \cite{CKS:KhovanovTuraev} showed the support of Khovanov homology gives a lower bound on Turaev genus. A link is {\em adequate} if it has a diagram such that every trace in both the all-$A$ and all-$B$ states intersects two distinct components in the state. Abe \cite{Abe:Adequate} showed that the Khovanov homology bound is exact whenever the link is adequate. In \cite{DasLow:KhovanovTuraev} we gave a model of Khovanov homology based on graphs embedded in the Turaev surface. Lowrance \cite{Low:HFKTuraev} showed that the support of knot Floer homology gives a lower bound on Turaev genus and discussed the relationship between Turaev genus and other link invariants called alternating distances \cite{Low:AltDist}. In \cite{DasLow:TuraevConcordance} we constructed lower bounds on Turaev genus from knot signature, the Ozsv\'ath-Szab\'o $\tau$-invariant, and Rasmussen $s$-invariant. Kalfagianni \cite{Kalfagianni:Adequate} gave a characterization of adequate links in terms of their Turaev genus and colored Jones polynomials.

\subsection{Alternating decompositions}
\label{subsection:altdecomp}

Armond and Lowrance \cite{ArmLow:Turaev} and Kim \cite{Kim:TuraevClassification} studied the Turaev surface via the alternating decompositions of link diagrams of Thistlethwaite \cite{Thistlethwaite:JonesBreadth}. We consider a link diagram $D$ as a $4$-regular graph whose vertices correspond to crossings and where the edges meeting at a vertex are decorated with over/under information. An edge is called non-alternating if both of its endpoints are over-crossings or if both of its endpoints are under-crossings. An {\em alternating decomposition} of $D$ is a pair $(D,\{\gamma_1,\dots,\gamma_k\})$ where $\gamma_1,\dots,\gamma_k$ are simple closed curves in the plane obtained as follows. Each non-alternating edge of $D$ is marked with distinct points. Inside of each face of $D$, the marked points are connected by arcs as in Figure \ref{figure:arcs}. The resulting set of curves is $\{\gamma_1,\dots,\gamma_k\}$.
\begin{figure}[h]
$$\begin{tikzpicture}
\begin{scope}[very thick]
	\draw (.7,0) -- (2.2,0);
	\draw (2.5,0) -- (2.8,0);
	\draw (2.1,-.3) -- (3.6,1.2);
	\draw (3.4,1.2) -- (3.4,2.2);
	\draw (3.4, .8) -- (3.4,.6);
	\draw (3.4, 2.6) -- (3.4,2.8);
	\draw (3.6,2.2) -- (2.6,3.2);
	\draw (2.3,3.5) -- (2.1,3.7);
	\draw (2.7,3.4) -- (.7,3.4);
	\draw (.9,3.3) -- (.1,2.5);
	\draw (1.1,3.5) -- (1.3,3.7);
	\draw (-.1,2.3) -- (-.3,2.1);
	\draw (0,2.7) -- (0,.7);
	\draw (0.1,.9) -- (.9,.1);
	\draw (-.1,1.1) -- (-.3,1.3);
	\draw (1.1,-.1) -- (1.3,-.3);
\end{scope}

\node (1) at (.7,.3){};
\node (2) at (.3,.7){};
\node (3) at (0,1.5) {};
\node (4) at (0,1.9) {};
\node (5) at (.3,2.7) {};
\node (6) at (.7,3.1) {};
\node (7) at (1.5,3.4) {};
\node (8) at (1.9, 3.4) {};
\node (9) at (3.4,1.9) {};
\node (10) at (3.4,1.5) {};
\node (11) at (3.1,.7) {};
\node (12) at (2.7,.3) {};

\begin{scope}[line/.style={shorten >=-0.2cm,shorten <=-0.2cm},thick,red]
\fill (1) circle (.1cm);
\fill (2) circle (.1cm);
\fill (3) circle (.1cm);
\fill (4) circle (.1cm);
\fill (5) circle (.1cm);
\fill (6) circle (.1cm);
\fill (7) circle (.1cm);
\fill (8) circle (.1cm);
\fill (9) circle (.1cm);
\fill (10) circle (.1cm);
\fill (11) circle (.1cm);
\fill (12) circle (.1cm);
\path [bend left, line]   (1) edge (12);
\path [bend left, line]   (3) edge (2);
\path [bend left, line]   (5) edge (4);
\path [bend left, line]   (7) edge (6);
\path [bend left, line]   (9) edge (8);
\path [bend left, line]   (11) edge (10);

\end{scope}
\end{tikzpicture}$$
\caption{Each non-alternating edge is marked with two points. Inside of each face, draw arcs that connect marked points that are adjacent on the boundary but do not lie on the same edge of $D$.}
\label{figure:arcs}
\end{figure}
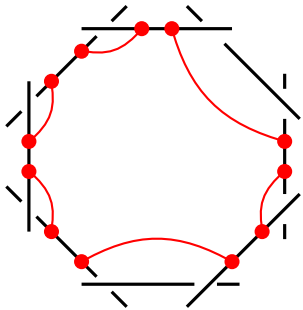

The collection of curves $\{\gamma_1,\dots,\gamma_k\}$ partition the diagram into maximally alternating regions, and these regions are often tangles. This approach can be used to prove the following theorem.
\begin{theorem}[Armond - Lowrance \cite{ArmLow:Turaev}, Kim \cite{Kim:TuraevClassification}]
\label{theorem:genus1}
If $L$ is a non-split link of Turaev genus one, then $L$ has a diagram $D$ obtained by arranging $2k$ proper alternating two-tangles in a cycle, as in Figure \ref{figure:genus1}.
\end{theorem}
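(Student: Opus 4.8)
The plan is to start from a connected diagram $D$ of $L$ that realizes the Turaev genus, so that $g_T(D)=1$ and the Turaev surface $F(D)$ is a torus on which $D$ projects alternatingly. I would first perform the alternating decomposition on $D$, producing the curves $\gamma_1,\dots,\gamma_k$ and the maximally alternating regions described before Figure \ref{figure:arcs}. The goal is then to show that, after isotopy and the removal of inessential pieces, these regions are exactly $2k$ proper alternating $2$-tangles glued cyclically as in Figure \ref{figure:genus1}.

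The heart of the argument is to transport the decomposition curves onto the Turaev surface. Each $\gamma_i$ meets $D$ transversally only at the marked points on non-alternating edges; I would show that it can be realized as a simple closed curve on $F(D)$ lying in the equatorial cobordism band of the construction in Figure \ref{figure:saddle}, disjoint from the all-$A$ and all-$B$ state disks. A curve $\gamma_i$ that bounds a disk on $F(D)$ cuts off a region that can be absorbed into a neighboring alternating region or removed by reducing the diagram, so after these reductions every remaining $\gamma_i$ is essential. Since $F(D)$ is a torus, any family of disjoint essential simple closed curves is mutually parallel and non-separating, so the $\gamma_i$ cut $F(D)$ into annuli arranged in a cycle, each annulus carrying a single alternating region. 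This is precisely the cyclic arrangement of Figure \ref{figure:genus1}.

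It then remains to verify three points. First, each annulus meets its two bounding curves in exactly two strands, so that each alternating region is a $2$-tangle; I expect this to follow from the fact that the parallel curves are isotopic and cross $D$ minimally, any larger crossing number being either reducible or splittable into further parallel curves. Second, each of these alternating $2$-tangles is \emph{proper}: if some face met two distinct boundary arcs, the bounding curve could be isotoped to reduce crossings or to merge regions, contradicting maximality. Third, the number of tangles is even. Properness forces the $+/-$ decorations to alternate around each tangle's boundary, and since a non-alternating edge joins two endpoints carrying equal decoration, the ``type'' of a tangle must switch as one travels from one region to the next around the cycle; closing the cycle consistently then forces its length to be even, say $2k$.

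The main obstacle, I expect, is the first step of the second paragraph: rigorously identifying the purely combinatorial decomposition curves $\gamma_i$ with essential curves on the torus $F(D)$ and controlling the inessential ones. This requires relating the alternating decomposition to the topology of the Turaev surface and arguing that genus one forces every non-alternating transition to occur along a curve parallel to the core of the single handle. Once this correspondence is established, the parallelism of essential curves on the torus does the essential structural work, and the remaining tangle-theoretic bookkeeping behind the $2$-tangle and parity claims should be routine.
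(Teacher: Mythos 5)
First, a caveat: the paper does not prove Theorem \ref{theorem:genus1} at all. It is imported from \cite{ArmLow:Turaev} and \cite{Kim:TuraevClassification}, with only the remark that the alternating-decomposition machinery ``can be used to prove'' it. So there is no in-paper proof to measure you against. What can be said is that your overall strategy --- take a genus-one diagram, form Thistlethwaite's alternating decomposition, transport the curves $\gamma_i$ to the Turaev surface, and invoke the fact that disjoint essential simple closed curves on a torus are mutually parallel --- is the strategy of the cited proofs. The problem is that both load-bearing steps are asserted rather than argued, and one of your proposed fixes cannot work as stated.

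Concretely: (i) your treatment of inessential curves is wrong as written. If $\gamma_i$ bounds a disk on $F(D)$, the region it cuts off cannot be ``absorbed into a neighboring alternating region,'' because by construction every edge crossed by $\gamma_i$ is non-alternating, so the union of the two adjacent regions is never alternating. Handling these curves requires changing the diagram itself (flypes, or passing to a different genus-one diagram) while controlling the genus, and this is where the cited proofs do real work; ``removed by reducing the diagram'' does not specify a move. (ii) More seriously, nothing in your argument shows that each maximal alternating region is a $2$-tangle. Your justification --- ``any larger crossing number being either reducible or splittable into further parallel curves'' --- is not a mechanism: an irreducible diagram can perfectly well have a maximal alternating region meeting its boundary curve in $6$ or more points, and the torus topology alone only gives you a cyclic arrangement of alternating $n$-tangles for some unknown $n$. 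What rules out $n>2$ is quantitative: one must relate $g_T(D)$ from Equation \ref{equation:g_T(D)} to the combinatorics of the decomposition (how many points each $\gamma_i$ meets $D$ in, and how the all-$A$ and all-$B$ state circles decompose across the regions), and show that a curve meeting $D$ in more than four points forces genus at least two. That Euler-characteristic bookkeeping is the heart of the theorem and is absent from your sketch. Your parity argument for $2k$ is fine in outline (decorations alternate around a proper alternating tangle, a non-alternating edge joins like signs, so the two decoration patterns must alternate around the cycle), but it only applies after the regions are known to be proper $2$-tangles.
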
 

One can also use a collection of previous results to show that all but four prime Turaev genus one knots are hyperbolic. These four knots are the torus knots $T_{3,4}$, $T_{3,5}$ and their mirrors, and also happen to be the non-alternating torus pretzel knots.
\begin{proposition}
If $K$ is a prime knot of Turaev genus one, then $K$ is either hyperbolic or a torus pretzel knot.
\end{proposition}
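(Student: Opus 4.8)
The plan is to combine Thurston's geometrization trichotomy with two case analyses. Every prime knot $K$ is hyperbolic, a torus knot, or a satellite knot, so the proposition reduces to two tasks: identifying the torus knots of Turaev genus one, and excluding satellite knots altogether. Once both are settled, the only prime Turaev genus one knots that can fail to be hyperbolic are the asserted torus pretzel knots.

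For the torus knots I would argue as follows. The only alternating torus knots are the $(2,n)$ torus knots, and these have Turaev genus zero, so a torus knot of positive Turaev genus must be non-alternating. To control which ones have Turaev genus one, I would invoke the lower bound of Champanerkar, Kofman, and Stoltzfus \cite{CKS:KhovanovTuraev}: the number of diagonals supporting the Khovanov homology of $K$ is at most $g_T(K) + 2$, so $g_T(K) = 1$ forces this homological width to be at most three. Since the homological width of $T_{p,q}$ grows without bound as $p,q$ increase outside the $(2,n)$ family, only finitely many torus knots meet this constraint; checking them leaves exactly $T_{3,4}$, $T_{3,5}$, and their mirrors, for which the almost alternating (dealternating number one) diagrams give the matching bound $g_T \le 1$. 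I would then record the standard identifications $T_{3,4} = 8_{19} = P(-2,3,3)$ and $T_{3,5} = 10_{124} = P(-2,3,5)$, exhibiting each as a torus pretzel knot.

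The hard part is excluding satellite knots, since a satellite carries an essential torus in its complement that must be shown incompatible with Turaev genus one. Here I would use Theorem \ref{theorem:genus1}: a prime Turaev genus one knot has the diagram of Figure \ref{figure:genus1}, a cycle of $2k$ proper alternating tangles, and it projects alternatingly onto its genus-one Turaev surface. My preferred route is to appeal to a hyperbolicity theorem for links carrying an alternating projection on a closed surface, of Menasco and Howie--Purcell type, after verifying that this tangle-cycle diagram satisfies the requisite primeness, adequacy, and representativity hypotheses; such a theorem yields hyperbolicity for every prime non-torus example and in particular forbids an essential companion torus. A more hands-on alternative is a Menasco-style analysis of how a hypothetical essential torus intersects the crossing balls of the alternating tangles, proving the complement atoroidal directly. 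Either way the genuine obstacle is ruling out essential tori against the tangle-decomposition structure; with atoroidality in hand, geometrization forces $K$ to be hyperbolic in every case except the torus pretzel knots found above.
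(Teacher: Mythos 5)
Your overall architecture (geometrization trichotomy, classify the torus knots, exclude satellites) matches the shape of the problem, and your identification of $T_{3,4}=8_{19}=P(-2,3,3)$ and $T_{3,5}=10_{124}=P(-2,3,5)$ lands at the right endpoint. But the step you yourself flag as the hard part is left as a plan rather than a proof, and the plan as stated would not go through. The paper disposes of satellites (and of the whole trichotomy) with a single citation: Adams proved that every prime \emph{toroidally alternating} knot is either hyperbolic or a torus knot, and Turaev genus one knots are toroidally alternating, so no essential-torus analysis is needed. Your proposed substitute --- a Menasco or Howie--Purcell type hyperbolicity theorem for alternating projections on closed surfaces --- does not apply here: those theorems impose representativity and related hypotheses that genus one Turaev surfaces generically fail (Turaev surfaces are precisely the sort of example such hypotheses are designed to exclude), and the tangle-cycle diagrams of Figure \ref{figure:genus1} need not be adequate. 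The ``hands-on Menasco-style analysis'' alternative is not carried out. So the satellite exclusion is a genuine gap, and the missing ingredient is Adams's toroidally alternating theorem.

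Your torus-knot step also rests on an input you cannot currently supply. To run the Champanerkar--Kofman--Stoltzfus width bound you would need to know the Khovanov homological width of all torus knots, or at least a lower bound ruling out width at most three outside $\{T_{3,4},T_{3,5}\}$ and their mirrors; rational Khovanov homology of torus knots is not known in the generality that your ``checking them'' requires. The paper instead uses the concordance bound $|s(K)+\sigma(K)|\le 2g_T(K)$ from \cite{DasLow:TuraevConcordance} together with Abe's complete determination of the torus knots satisfying $|s(K)+\sigma(K)|\le 2$; since $s$ and $\sigma$ are explicitly computable for torus knots, that route actually closes. Replacing your width argument with the $s+\sigma$ bound and citing Adams for the toroidally alternating step turns your sketch into the paper's proof.
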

\begin{proof}
Adams \cite{Adams:ToroidallyAlternating} proved that every prime toroidally alternating knot is either hyperbolic or a torus knot. Since Turaev genus one knots are toroidally alternating, the same holds for them. Abe \cite{Abe:Torus} proved that the only torus knots for which $|s(K) + \sigma(K)| \leq 2$ are $T_{2,2n-1}$, $T_{3,4}$, $T_{3,5}$, and their mirrors. We prove in \cite{DasLow:TuraevConcordance} that $|s(K) +\sigma(K)|\leq 2g_T(K)$. Since $T_{2,2n-1}$ are alternating, it follows that the only torus knots of Turaev genus one are $T_{3,4}$, $T_{3,5}$, and their mirrors. These four knots are the only non-alternating knots that are both torus and pretzel knots by Kawauchi \cite[Theorem~2.3.2]{Kawauchi:Survey}. 
\end{proof}

Non-alternating pretzel links and non-alternating Montesinos links are all Turaev genus one. All non-alternating knots with ten or fewer crossings are Turaev genus one, and most non-alternating knots with twelve or fewer crossings are also Turaev genus one (see \cite{Jablan:Turaev} and Section \ref{section:Jones}). Figure \ref{figure:12n888} shows the mirror of the knot $12n_{888}$ and its alternating decomposition. Since the knot is non-alternating and has an alternating decomposition in the form of Figure \ref{figure:genus1}, its Turaev genus is one.
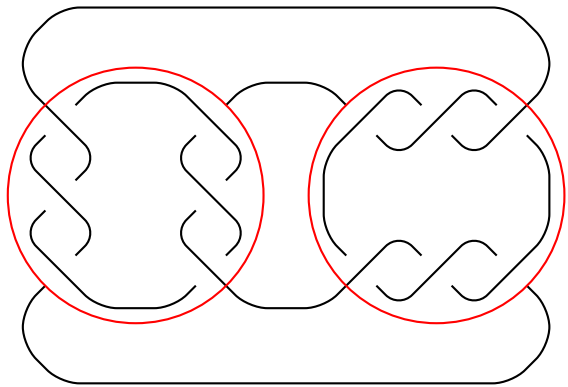
\begin{figure}[h]
$$\begin{tikzpicture}[thick, rounded corners = 2.5mm]

\draw (6.7,2.3) -- (7,2)--(8,3)--(8,3.5)--(7.5,4)-- (1.5,4) -- (1,3.5) -- (1,3) -- (2,2) -- (1.7,1.7);
\draw (1.3,1.3) -- (1,1) -- (2,0) -- (3,0) -- (3.3,.3);
\draw (3.7,.7) -- (4,1) -- (3,2) -- (3.3,2.3);
\draw (1.3,2.3) -- (1,2) -- (2,1) -- (1.7,.7);
\draw (1.7,2.7) -- (2,3) -- (3,3) -- (4,2) -- (3.7,1.7);
\draw (3.3,1.3) -- (3,1) -- (4,0) -- (5,0) -- (6,1) -- (6.3,.7);

\draw (3.7,2.7) -- (4,3) -- (5,3) -- (5.3,2.7);
\draw (5.7,2.3) -- (6,2) -- (7,3) -- (7.3,2.7);
\draw (5.3,.7) -- (5,1) -- (5,2) -- (6,3) -- (6.3,2.7);
\draw (5.7,.3) -- (6,0) -- (7,1) -- (7.3,.7);
\draw (6.7,.3) -- (7,0) --(8,1) -- (8,2) -- (7.7,2.3);
\draw (7.7, .3) -- (8,0) -- (8,-.5) -- (7.5,-1) -- (1.5,-1) -- (1,-.5) -- (1,0) -- (1.3,.3);



\draw [red] (2.5,1.5) circle (1.7cm);
\draw [red] (6.5,1.5) circle (1.7cm);

\end{tikzpicture}$$
\caption{The knot $\overline{12n_{888}}$ and its alternating decomposition.}
\label{figure:12n888}
\end{figure}

\section{Signature}
\label{section:signature}

In this section, we prove Theorems \ref{theorem:mainsignature} and \ref{theorem:signature2}. Using work of Gordon and Litherland \cite{GordonLitherland:Signature}, Thistlethwaite \cite{Thistlethwaite:Adequate}, and Murasugi \cite{Murasugi:Graphs}, the authors previously showed the following theorem. Recall that given a link diagram $D$, the number of components in the all-$A$ and all-$B$ states are given by $s_A(D)$ and $s_B(D)$ respectively. Also, $c_+(D)$ and $c_-(D)$ denote the number of positive and negative crossings in $D$, as in Figure \ref{figure:crossingsign}.
\begin{proposition}[Dasbach, Lowrance - Proposition 5.3 \cite{DasLow:TuraevConcordance}]
\label{proposition:TuraevSig}
Let $L$ be a non-split link with diagram $D$. Then
\begin{equation}
\label{inequality:TuraevSig}
s_A(D) - c_+(D) - 1 \leq \sigma(L) \leq -s_B(D) + c_-(D) +1.
\end{equation}
\end{proposition}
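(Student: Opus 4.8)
First I would collapse the two displayed inequalities into one. Replacing $L$ by its mirror image $\overline{L}$ negates the signature, $\sigma(\overline{L}) = -\sigma(L)$, while on the level of a diagram it interchanges the all-$A$ and all-$B$ states and interchanges positive and negative crossings, so $s_A(\overline{D}) = s_B(D)$ and $c_+(\overline{D}) = c_-(D)$. Thus the upper bound for $L$ is precisely the lower bound applied to $\overline{L}$, and it suffices to prove
$$s_A(D) - c_+(D) - 1 \le \sigma(L).$$
To access $\sigma(L)$ diagrammatically I would pass to a checkerboard coloring of $D$ and the associated spanning surface $F$, and invoke the Gordon-Litherland formula
$$\sigma(L) = \operatorname{sig}(\mathcal{G}_F) - \mu(F),$$
where $\mathcal{G}_F$ is the symmetric Gordon-Litherland form on $H_1(F)$---concretely the Goeritz form of the corresponding Tait graph---and $\mu(F)$ is the correction term, a signed count of crossings by their type relative to $F$.

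The heart of the argument is a lower bound on $\operatorname{sig}(\mathcal{G}_F)$ together with an upper bound on $\mu(F)$. The Goeritz form is a signed Laplacian-type matrix of the Tait graph, and I would decompose its edges according to the oriented sign of the underlying crossing. On the sublattice generated by the crossings of one sign the form is definite---this is exactly the phenomenon that makes Traczyk's formula (\ref{equation:signaturealternating}) an equality for alternating diagrams, where a single coloring renders every crossing of one type and the whole form definite. For a general diagram only this definite part is controlled, so the count of eigenvalues of the opposite sign is bounded by the number of ``wrong-signed'' crossings, which is where $c_+(D)$ enters; the correction term $\mu(F)$ is simultaneously estimated against $c_+(D)$ using the standard dictionary between Gordon-Litherland crossing type and oriented crossing sign. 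Here I would lean on Murasugi's graph-theoretic computation of the signature and on Thistlethwaite's identification of the all-$A$ and all-$B$ states with extremal spanning structures of the Tait graph.

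It then remains to turn the resulting rank estimate into the state count $s_A(D)$. The rank of $\mathcal{G}_F$ is one less than the number of checkerboard regions of the relevant color, and an Euler-characteristic count---using the planar relation (number of black regions)$+$(number of white regions)$= c(D)+2$ together with the Turaev genus identity (\ref{equation:g_T(D)})---converts this region count into $s_A(D)$. The additive constant $-1$, and the precise way $c_+(D)$ is subtracted, are pinned down by demanding consistency with the alternating equality (\ref{equation:signaturealternating}). Assembling the lower bound on $\operatorname{sig}(\mathcal{G}_F)$ with the upper bound on $\mu(F)$ gives $\sigma(L) \ge s_A(D) - c_+(D) - 1$, and the mirror-image reduction from the first step then yields the companion inequality.

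The step I expect to be most delicate is the middle one: a single crossing carries three different labels---its Gordon-Litherland type (an unoriented datum depending on the coloring), its oriented sign (feeding $c_\pm(D)$), and the local behavior of its $A$-smoothing (feeding $s_A(D)$)---and these must all be reconciled through one consistent choice of checkerboard coloring so that the definite part of the Goeritz form is correctly isolated and the correction term is correctly signed. Traczyk's equality for alternating diagrams is the template that fixes every sign and constant, and the work is to show that passing to a non-alternating diagram only moves the signature in the permitted direction.
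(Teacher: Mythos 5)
Your opening reduction is fine: mirroring swaps the all-$A$ and all-$B$ states and the crossing signs while negating $\sigma$, so the two inequalities in (\ref{inequality:TuraevSig}) are equivalent; and your choice of the Gordon--Litherland formula together with Murasugi's and Thistlethwaite's graph-theoretic signature estimates is exactly the toolbox the paper points to (the paper does not reprove the proposition here --- it quotes it from \cite{DasLow:TuraevConcordance} and only remarks that the argument for knots extends verbatim to links).

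However, the step where you ``convert the region count into $s_A(D)$'' is a genuine gap, and it sits at the heart of the statement. The rank of the Goeritz form is indeed one less than the number of checkerboard regions of one color, and the two region counts sum to $c(D)+2$; but the components of the all-$A$ state are \emph{not} the checkerboard regions unless the diagram is alternating. By Equation \ref{equation:g_T(D)} one has $s_A(D)+s_B(D)=c(D)+2-2g_T(D)$, which is strictly less than the total number of regions whenever $g_T(D)>0$, so no Euler-characteristic identity can equate $s_A(D)$ with the region count of one color --- the two quantities simply measure different decompositions of the diagram. The entire content of the proposition is to bound a checkerboard quantity ($\operatorname{sig}$ of the Goeritz form corrected by $\mu(F)$) by a state-sum quantity ($s_A(D)-c_+(D)$), and your sketch assumes this identification rather than proving it. Relatedly, the ``definite sublattice'' step is not literally available as stated: the Goeritz form lives on the span of the region classes, not on a lattice indexed by crossings, so isolating a definite part attached to the crossings of one sign requires Murasugi's analysis of signatures of signed Tait graphs under edge deletion and contraction, together with Thistlethwaite's combinatorial dictionary between the all-$A$ state and the signed Tait graph. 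These are precisely the ingredients the cited proof supplies and that your outline leaves unargued, so as written the proposal does not close.
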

We note that this proposition is stated only for knots in \cite{DasLow:TuraevConcordance}. However, the results that it is based on in \cite{Thistlethwaite:Adequate} and \cite{Murasugi:Graphs} are valid for links of an arbitrary number of components. Moreover, the proof for a link of multiple components is the same as the proof for knots.

The Alexander polynomial of a link is determined by the skein relation
$$\Delta_{L_+}(t) - \Delta_{L_-}(t) = (t^{1/2} - t^{-1/2})\Delta_{L_0}(t).$$
Evaluating the Alexander polynomial of a knot at $t=1$ always yields $1$, since the skein relation becomes $\Delta_{K_+}(1) = \Delta_{K_-}(1)$ and the $\Delta_U(t)=1$ where $U$ is the unknot. Also, the Alexander polynomial is symmetric, i.e.
$$\Delta_L(t) = a_0 + \sum_{i=1}^n a_i(t^i+t^{-i}),$$
for some non-negative integer $n$ and some integer coefficients $a_i$. For a knot $K$, we have
\begin{align*}
\Delta_K(-1) = &\; a_0 + \sum_{i=1}^n (-1)^i 2a_i\\
\equiv &\; a_0 + \sum_{i=1}^n 2a_i \mod 4\\
\equiv &\; \Delta_K(1) \mod 4.
\end{align*}
Therefore, for any knot $K$, we have $\Delta_K(-1)\equiv 1 \mod 4$. Giller \cite{Giller:ConwayCalculus} used this fact to prove the following theorem. 
\begin{theorem}[Giller \cite{Giller:ConwayCalculus}]
\label{theorem:SigAlex}
Suppose that $K$ is a knot with diagram $D$. Then the signature of $K$ can be determined by the following three statements.
\begin{enumerate}
\item If $K$ is the unknot, then $\sigma(K)=0$.
\item If $K_+$ and $K_-$ have diagrams $D_+$ and $D_-$ that differ by a single crossing change where $D_+$ has the positive crossing and $D_-$ has the negative crossing, then
$$\sigma(K_-) - 2\leq \sigma(K_+) \leq \sigma(K_-).$$
\item The Alexander polynomial $\Delta_K(t)$ and the signature $\sigma(K)$ satisfy
$$\operatorname{sign} \Delta_K(-1) = (-1)^{\frac{\sigma(K)}{2}}.$$
\end{enumerate}
\end{theorem}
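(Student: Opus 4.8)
The plan is to read Giller's result as asserting two things: that the signature satisfies the three listed properties, and that these properties together pin down its value on every knot. All three are cleanest to verify through the symmetric Seifert form, so I would first fix a Seifert surface for $K$ of genus $g$ with Seifert matrix $V$, so that $\sigma(K)$ is the signature of the nonsingular symmetric $2g \times 2g$ matrix $V + V^T$; it is nonsingular because $\det K = |\Delta_K(-1)|$ is odd and hence nonzero. Statement~(1) is then immediate, since the unknot bounds a disk and has empty Seifert form.

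For statement~(3) I would use the Conway-normalized identity $\Delta_K(t) \doteq \det(t^{1/2} V - t^{-1/2} V^T)$ and evaluate at $t = -1$. Choosing $(-1)^{1/2} = i$ gives $\Delta_K(-1) = \det\bigl(i(V + V^T)\bigr) = i^{2g}\det(V + V^T) = (-1)^g \det(V + V^T)$. Writing $n_+$ and $n_-$ for the numbers of positive and negative eigenvalues of $V + V^T$, one has $\operatorname{sign}\det(V+V^T) = (-1)^{n_-}$, while $n_+ + n_- = 2g$ and $\sigma(K) = n_+ - n_-$ force $n_- = g - \sigma(K)/2$. Substituting shows the two factors of $(-1)^g$ cancel and leaves $\operatorname{sign}\Delta_K(-1) = (-1)^{\sigma(K)/2}$, which is exactly statement~(3).

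The crossing-change inequality of statement~(2) is the main obstacle. Here I would build Seifert surfaces for $K_+$ and $K_-$ that agree away from the altered crossing. Starting from the oriented resolution $L_0$ and a Seifert surface $F_0$ for it (obtained, say, by applying Seifert's algorithm to the shared part of the diagram), the two knots $K_\pm$ arise by attaching to $F_0$ a single band carrying a positive or a negative half-twist at the crossing site. The resulting Seifert matrices $V_+$ and $V_-$ then have the same size and coincide except in the one diagonal entry recording the self-framing of the new band, where they differ by $\pm 1$. Consequently $V_+ + V_+^{T}$ and $V_- + V_-^{T}$ differ by a single rank-one symmetric perturbation $c\,e e^{T}$. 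Eigenvalue interlacing shows such a perturbation moves the signature by at most $2$, and the sign of $c$---fixed by whether the half-twist is positive or negative---forces the change to be one-sided, yielding $\sigma(K_-) - 2 \le \sigma(K_+) \le \sigma(K_-)$. The delicate points are producing compatible surfaces that realize the single-band modification and tracking orientations so that $c$ carries the correct sign; I expect the real work to lie here.

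Finally, I would show that the three statements determine $\sigma(K)$. Any knot admits a finite sequence of crossing changes to the unknot, so I induct along such a sequence, starting from $\sigma(U) = 0$ by statement~(1). At each step statement~(2) confines $\sigma$ to the interval $[\sigma_{\mathrm{prev}} - 2,\ \sigma_{\mathrm{prev}}]$, and since the signature of a knot is even, only the two values differing by $2$ survive. These two candidates, differing by $2$, have opposite values of $(-1)^{\sigma/2}$, so statement~(3)---whose left-hand side $\operatorname{sign}\Delta_K(-1)$ is computable from the diagram---selects exactly one of them. Hence $\sigma(K)$ is uniquely determined.
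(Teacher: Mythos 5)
The paper does not actually prove this statement: it is quoted verbatim from Giller with a citation, and the only thing the authors verify themselves is the auxiliary congruence $\Delta_K(-1)\equiv 1 \pmod 4$ that guarantees $\Delta_K(-1)\neq 0$, so that condition (3) makes sense and the recursive scheme terminates. Your proposal therefore cannot be compared to an in-paper argument, but judged on its own it is the standard proof and is essentially sound. The verification of (3) via $\Delta_K(-1)=\det\bigl(i(V+V^T)\bigr)=(-1)^g\det(V+V^T)$ together with $n_-=g-\sigma(K)/2$ is correct (you should say explicitly that you are using the Conway-normalized polynomial, since otherwise the sign of $\Delta_K(-1)$ is not well defined). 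The uniqueness argument at the end is also right: unknotting sequences exist, (2) confines each step to two even values differing by $2$, and these have opposite values of $(-1)^{\sigma/2}$, so (3) selects one; this is exactly the ``Conway calculus'' mechanism the theorem's name refers to. The one place where your sketch leaves real work undone is, as you acknowledge, condition (2): you need compatible Seifert surfaces for $K_+$ and $K_-$ whose symmetrized Seifert forms differ by a rank-one perturbation $c\,ee^T$ of a \emph{definite} sign. The cleanest route is to apply Seifert's algorithm to $D_+$ and $D_-$ directly (they have identical Seifert circles, so the two surfaces and the resulting matrices have the same size and agree except in one diagonal entry), and then to pin down the sign of that entry by computing the self-linking of the affected generator in a model example such as the two diagrams of the unknot/trefoil pair; without that computation the inequality could come out reversed. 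With that convention check supplied, your argument is complete.
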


Proposition \ref{proposition:TuraevSig} and  Theorem \ref{theorem:SigAlex} give us the tools necessary to prove Theorem \ref{theorem:mainsignature}. Theorem \ref{theorem:signature2} then follows from Theorem \ref{theorem:mainsignature}.
\begin{proof}[Proof of Theorem \ref{theorem:mainsignature}]
Let $D$ be a diagram with $g_T(D)=1$. Equation \ref{equation:g_T(D)} implies that $c(D) -s_A(D) - s_B(D) = 0$. Since $c(D)=c_+(D) + c_-(D)$, it follows that 
$s_A(D) - c_+(D) + 1 = -s_B(D) + c_-(D)  + 1$. Therefore, Inequality \ref{inequality:TuraevSig} implies that
$$s_A(D) - c_+(D) -1 \leq \sigma(K) \leq s_A(D) - c_+(D) + 1.$$

Because the signature of a knot is always even, Traczyk's formula (Equation \ref{equation:signaturealternating}) implies that for any alternating knot diagram $D_{\alt}$, the quantity $s_A(D_{\alt}) - c_+(D_{\alt})  -1$ is even. Changing a crossing of a knot diagram changes the number of components in the all-$A$ state by one and changes the number of positive crossings by one. Since $D$ can be obtained from $D_{\alt}$ via a sequence of crossing changes, it follows that $s_A(D)- c_+(D) - 1$ is even.
Therefore $\sigma(K) = s_A(D) - c_+(D) -1$ or $s_A(D) - c_+(D) +1$. Moreover, since $\Delta_K(-1)\equiv 1 \mod 4$, Condition (3) from Theorem \ref{theorem:SigAlex} is equivalent to $\sigma(K) \equiv \det K -1\mod 4$. Therefore if $s_A(D) - c_+(D) -1 \equiv \det(K) -1 \mod 4$, then $\sigma(K) = s_A(D) - c_+(D) -1$ and if $s_A(D) - c_+(D) + 1\equiv \det(K)-1$, then $\sigma(K) = s_A(D) - c_+(D)  + 1$.
\end{proof}

\begin{proof}[Proof of Theorem \ref{theorem:signature2}]
We prove the result in the case where $D$ has the numerator orientation. The proof when $D$ has the denominator orientation is similar. Let $s_A^{\text{int}}(D)$  be the number of components of the all-$A$ state of $D$ that are completely contained within one of the tangles $R_i$. Similarly, let $s_A^{\text{int}}(N(R_i))$ be the number of components of the all-$A$ state of $N(R_i)$ that are completely contained in the tangle $R_i$. The total number of interior components of the all-$A$ state of $D$ is the same as the sum of the number of interior components of the all-$A$ states of $N(R_i)$, i.e.
$$s_A^{\text{int}}(D) = \sum_{i=1}^{2k} s_A^{\text{int}}(N(R_i)).$$
Also, if $i$ is odd, then $s_A(N(R_i)) = s_A^{\text{int}}(N(R_i)) + 1$, and if $i$ is even, then $s_A(N(R_i)) = s_A^{\text{int}}(N(R_i)) + 2$. Furthermore, $s_A(D) = s_A^{\text{int}}(D) + k$. 

Since $D$ has the numerator orientation, a crossing is positive in $N(R_i)$ if and only if it is also positive in $D$. Therefore $c_+(D) = \sum_{i=1}^{2k} c_+(N(R_i))$. Since each $N(R_i)$ is an alternating diagram, we can apply Traczyk's formula (Equation \ref{equation:signaturealternating}) to obtain
\begin{align*}
\sum_{i=1}^{2k} \sigma(N(R_i)) =&\;  \sum_{i=1}^{2k} (s_A(N(R_i)) - c_+(N(R_i)) - 1)\\
 = & \; -c(D) - 2k + \sum_{i=1}^{2k} s_A(N(R_i))\\
 = & \; -c(D) - 2k + \sum_{i=1}^k s_A(N(R_{2i-1})) +  \sum_{i=1}^k s_A(N(R_{2i}))\\
 = & \; -c(D) - 2k + \sum_{i=1}^k (s_A^{\text{int}}(N(R_{2i-1}))+1) + \sum_{i=1}^k (s_A(N(R_{2i})) +2)\\
 = & \; -c(D) +k + \sum_{i=1}^{2k} s_A^{\text{int}}(N(R_i))\\
 = & \; -c(D) + s_A(D).
\end{align*}
Proposition \ref{proposition:TuraevSig} implies that 
$$\sigma(L) = s_A(D) - c(D) \pm 1 = \sum_{i=1}^{2k} \sigma(N(R_i)) \pm 1,$$
as desired.
\end{proof}

Let $R$ be the tangle obtained by connecting the northeast and southeast ends of $R_i$ to the northwest and southwest ends of $R_{i+1}$ respectively for $i=1,\dots, 2k-1$. The numerator closure of $R$ is the diagram of the link $L$ in Figure \ref{figure:genus1}, and the denominator closure of $R$ is $D(R_1)\# \cdots \# D(R_{2k})$. Conway \cite{Conway:Enumeration} proved that
$$\frac{\det N(R)}{\det D(R)} =\left| \sum_{i=1}^{2k} (-1)^i \frac{\det N(R_i)}{\det D(R_i)}\right|.$$
Consequently,
\begin{align}
\begin{split}
\label{eq:det}
 \det L = & \;  \det D(R) \left |\sum_{i=1}^{2k} (-1)^i \frac{\det N(R_i)}{\det D(R_i)}\right|\\
 = & \;  \prod_{i=1}^{2k} \det D(R_i) \left |\sum_{i=1}^{2k} (-1)^i \frac{\det N(R_i)}{\det D(R_i)}\right|\\
 = & \; \left|\sum_{i=1}^{2k}(-1)^i\det D(R_1)\cdots \det D(R_{i-1}) \det N(R_i) \det D(R_{i+1}) \cdots \det D(R_{2k})\right|.
\end{split}
 \end{align}

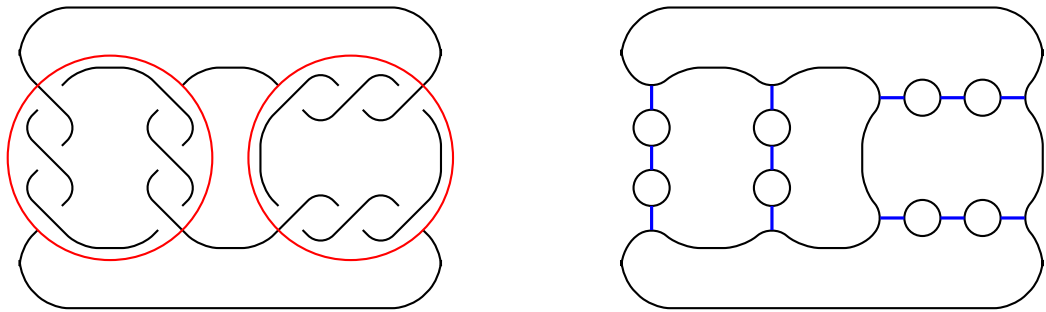
\begin{figure}[h]
$$\begin{tikzpicture}[thick, rounded corners = 2.4mm, scale = .8]

\draw (6.7,2.3) -- (7,2)--(8,3)--(8,3.5)--(7.5,4)-- (1.5,4) -- (1,3.5) -- (1,3) -- (2,2) -- (1.7,1.7);
\draw (1.3,1.3) -- (1,1) -- (2,0) -- (3,0) -- (3.3,.3);
\draw (3.7,.7) -- (4,1) -- (3,2) -- (3.3,2.3);
\draw (1.3,2.3) -- (1,2) -- (2,1) -- (1.7,.7);
\draw (1.7,2.7) -- (2,3) -- (3,3) -- (4,2) -- (3.7,1.7);
\draw (3.3,1.3) -- (3,1) -- (4,0) -- (5,0) -- (6,1) -- (6.3,.7);

\draw (3.7,2.7) -- (4,3) -- (5,3) -- (5.3,2.7);
\draw (5.7,2.3) -- (6,2) -- (7,3) -- (7.3,2.7);
\draw (5.3,.7) -- (5,1) -- (5,2) -- (6,3) -- (6.3,2.7);
\draw (5.7,.3) -- (6,0) -- (7,1) -- (7.3,.7);
\draw (6.7,.3) -- (7,0) --(8,1) -- (8,2) -- (7.7,2.3);
\draw (7.7, .3) -- (8,0) -- (8,-.5) -- (7.5,-1) -- (1.5,-1) -- (1,-.5) -- (1,0) -- (1.3,.3);

\draw [red] (2.5,1.5) circle (1.7cm);
\draw [red] (6.5,1.5) circle (1.7cm);

\begin{scope}[xshift = 10cm]
\draw (4,4) -- (1.5,4) -- (1,3.5) -- (1,3) -- (1.5,2.6) -- (2,3) -- (3,3) -- (3.5,2.6) -- (4,3) -- (5,3) -- (5.4,2.5) -- (5,2) -- (5,1) -- 
(5.4,.5) -- (5,0) -- (4,0) -- (3.5,.4) -- (3,0) -- (2,0) -- (1.5,.4) -- (1,0) -- (1,-.5) -- (1.5,-1) -- (7.5,-1) -- (8,-.5) -- (8,0) -- (7.6,.5) --
(8,1) -- (8,2) -- (7.6,2.5) -- (8,3) -- (8,3.5) -- (7.5,4) -- (4,4);

\draw (1.5,2) circle (.3cm);
\draw (1.5,1) circle (.3cm);

\draw (3.5, 2) circle (.3cm);
\draw (3.5,1) circle (.3cm);

\draw (7,.5) circle (.3cm);
\draw (6,.5) circle (.3cm);

\draw (6,2.5) circle (.3cm);
\draw (7,2.5) circle (.3cm);

\draw [very thick, blue] (1.5,2.3) -- (1.5,2.7);
\draw [very thick, blue] (3.5,2.3) -- (3.5,2.7);
\draw [very thick, blue] (1.5,1.7) -- (1.5,1.3);
\draw [very thick, blue] (3.5,1.7) -- (3.5,1.3);
\draw [very thick, blue] (1.5, .7) -- (1.5,.3);
\draw [very thick, blue] (3.5, .7) -- (3.5,.3);

\draw [very thick, blue] (5.3, 2.5) -- (5.7,2.5);
\draw [very thick, blue] (6.3,2.5) -- (6.7,2.5);
\draw [very thick, blue] (7.3,2.5) -- (7.7,2.5);
\draw [very thick, blue] (5.3, .5) -- (5.7,.5);
\draw [very thick, blue] (6.3,.5) -- (6.7,.5);
\draw [very thick, blue] (7.3,.5) -- (7.7,.5);

\end{scope}

\end{tikzpicture}$$
\caption{The knot $\overline{12n_{888}}$ with its alternating decomposition is on the left, and its all-$A$ state is on the right.}
\label{figure:12n888}
\end{figure}

\begin{example}
Let $K$ be the knot with diagram $D$ as in Figure \ref{figure:12n888}. Then $s_A(D) = 9$, and since every crossing in $D$ is negative, we have $c_+(D)=0$. Theorem \ref{theorem:mainsignature} implies that $\sigma(K) = 8$ or $10$. The numerator closure 
$N(R_1)$ and the denominator closure $D(R_2)$ are $(2,6)$ torus links, while the denominator closure $D(R_1)$ and the numerator closure $N(R_2)$ are the connected sum of two left handed trefoils. Thus 
$$\det N(R_1) = \det D(R_2) = 6~\text{and}~\det D(R_1)=\det N(R_2) = 9.$$
Equation \ref{eq:det}  implies that
$$\det K = |-6\cdot 6 + 9\cdot 9| = 45.$$
Since $45 - 1 \equiv 0 \mod 4$, it follows that $\sigma(K) = 8$.
\end{example}

\section{Jones polynomial}
\label{section:Jones}

In this section, we prove Theorem \ref{theorem:AlmostAlternating} and use it to compute the Turaev genus and dealternating numbers of some knots with 12 or fewer crossings. 

If $s$ is a state of $D$, then the {\em state graph} of $s$ is the graph whose vertices are in one-to-one correspondence with the components of $s$ and whose edges are in one-to-one correspondence with the traces of $s$ (and hence the crossings of $D$). The endpoints of each trace lie on either one or two components of the state $s$, and the edge corresponding to that trace is incident to the vertex or vertices corresponding to those components. If $s$ is the all-$A$ state, then its state graph is called the {\em all-$A$ state graph} of $D$, and is denoted by $G$. Similarly, if $s$ is the all-$B$ state, then its state graph is called the {\em all-$B$ state graph} of $D$, and is denoted by $\overline{G}$. If the diagram $D$ is alternating, then $G$ and $\overline{G}$ are the checkerboard graphs of $D$. The all-$A$ state graph of the diagram in Figure \ref{figure:12n888} is four triangles glued along a common vertex.

Let $D_{\alt}$ be an alternating link diagram with $c=c(D_{\alt})$ crossings. Let $G$ and $\overline{G}$ be the all-$A$ and all-$B$ state graphs. Let $G'$ and $\overline{G}'$ denote the graphs $G$ and $\overline{G}$ where multiple edges are replaced by a single edge. Let $v$ be the number of vertices in $G'$ (or equivalently in $G$), and let $\overline{v}$ be the number of vertices in $\overline{G}'$ (or equivalently $\overline{G}$). Also, let $e$ denote the number of edges of $G'$, and let $\overline{e}$ denote the number of edges of $\overline{G}'$. Dasbach and Lin \cite{DasbachLin:HeadTail} showed that the Kauffman bracket of $D_{\alt}$ can be expressed in the following way.
\begin{theorem}[Dasbach, Lin \cite{DasbachLin:HeadTail}]
\label{theorem:DL}
Suppose that $D_{\alt}$ is an alternating diagram. Then
\begin{align*}
\langle D_{\alt} \rangle = & \; (-1)^{v-1}A^{c+2v-2} + (-1)^{v-2}(e-v+1)A^{c+2v-6}\\
&  \; + \cdots +(-1)^{\overline{v}+2}(\overline{e} - \overline{v} +1)A^{6-c-2\overline{v}} + (-1)^{\overline{v}+1}A^{2-c-2\overline{v}}.
\end{align*}
\end{theorem}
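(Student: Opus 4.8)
The plan is to expand the Kauffman bracket as a state sum and extract its two highest (and, by symmetry, two lowest) coefficients, using that a reduced alternating diagram is adequate. Writing $\mathcal C$ for the set of crossings of $D_{\alt}$, index each state by the subset $\beta \subseteq \mathcal C$ of crossings receiving a $B$-resolution, and let $s_\beta$ be the resulting state with $|s_\beta|$ loops. Then
$$\langle D_{\alt}\rangle = \sum_{\beta \subseteq \mathcal C} A^{c - 2|\beta|}(-A^2 - A^{-2})^{|s_\beta|-1},$$
and the top term of the $\beta$-summand has $A$-degree $c - 2|\beta| + 2(|s_\beta|-1)$. Since $D_{\alt}$ is alternating it is $A$-adequate, so the all-$A$ state uniquely maximizes this degree at $c+2v-2$; as its leading loop contribution is $(-A^2)^{v-1}$, the top coefficient is $(-1)^{v-1}$, which is the first term claimed. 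The two lowest coefficients will follow by applying the same argument to the mirror diagram (equivalently the substitution $A \mapsto A^{-1}$), which interchanges the all-$A$ and all-$B$ states and hence the roles of $(v,e,G)$ and $(\overline v,\overline e,\overline G)$.

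The heart of the matter is the coefficient of $A^{c+2v-6}$, and the first step is to classify which states reach that degree. A local analysis of the resolutions should show that if $\beta$ lies inside a single multiple edge of $G$ — i.e.\ all crossings of $\beta$ join the same pair of all-$A$ loops — then resolving them one at a time first merges the two loops and thereafter repeatedly splits off a new loop, giving $|s_\beta| = v + |\beta| - 2$ and top degree exactly $c+2v-6$. If instead $\beta$ meets $t \ge 2$ distinct edges of the reduced graph $G'$, the analogous count gives top degree at most $c+2v-2-4t \le c+2v-10$. Adequacy (no crossing has both arcs on one all-$A$ loop, so $G$ has no self-loops) guarantees these are the only cases, so the contributions to $A^{c+2v-6}$ come only from the subleading term of the all-$A$ summand and the top terms of states supported on a single multiple edge.

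It then remains to add these with signs. The all-$A$ summand contributes $(v-1)(-1)^{v-1}$. For a multiple edge of multiplicity $m_i$ in $G$, the sub-collections of size $j$ number $\binom{m_i}{j}$ and each contributes $(-1)^{|s_\beta|-1} = -(-1)^{v+j}$, so this edge contributes
$$-(-1)^v\sum_{j=1}^{m_i}\binom{m_i}{j}(-1)^j = -(-1)^v\big((1-1)^{m_i}-1\big) = (-1)^v,$$
independent of $m_i$. Summing over the $e$ edges of $G'$ and combining with the all-$A$ term yields $(-1)^v(e-v+1) = (-1)^{v-2}(e-v+1)$, exactly the second term claimed.

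I expect the main obstacle to be the state classification of the second paragraph rather than the binomial bookkeeping above. One must verify rigorously that multiple edges of the all-$A$ state graph of an alternating diagram are genuinely twist regions, so that the loop count $|s_\beta| = v+|\beta|-2$ holds for a sub-collection of one multiple edge, and that subsets meeting several edges — including those whose edges share an all-$A$ loop — always lose at least four in top degree. This is precisely where the adequacy and the planar twist-region structure of reduced alternating diagrams are essential; once these loop-count estimates are in hand, the extraction of all four extreme coefficients is immediate.
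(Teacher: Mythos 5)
First, a point of comparison: the paper does not prove this statement at all --- it is imported as a black box from Dasbach--Lin \cite{DasbachLin:HeadTail} --- so there is no internal proof to measure you against; what you have written is essentially the standard proof of that result (state-sum expansion plus adequacy), and the skeleton, the sign bookkeeping, and the binomial cancellation are all correct. The worry you flag in your last paragraph is in fact a non-issue: two disjoint loops of the all-$A$ state cobound a unique annulus in $S^2$, every trace joining them lies in that annulus, and disjoint arcs in an annulus running between its two boundary circles are automatically parallel, so cutting along $j$ of them turns the two loops into exactly $j$ loops and $|s_\beta|=v+|\beta|-2$ for any nonempty $\beta$ supported on one multiple edge of $G$ --- no twist-region structure of the diagram is needed. (You do need $D_{\alt}$ to be \emph{reduced} so that $G$ and $\overline{G}$ have no loop edges; the theorem as stated omits this hypothesis, but it is false without it --- a single kink already violates the last term --- and the paper only ever applies it to reduced diagrams.)

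There is, however, one genuinely false intermediate claim: that a state meeting $t\geq 2$ distinct edges of $G'$ has top degree at most $c+2v-2-4t$. This fails for $t\geq 3$: if $\beta$ consists of three crossings forming a triangle in $G'$ (e.g.\ the standard trefoil diagram with $G$ the triangular checkerboard graph), the first two resolutions are merges but the third is a split, since by then both of its feet lie on the single merged loop; so $|s_\beta|=v-1$ and the top degree is $c+2v-10$, not $\leq c+2v-14$. What is true, and is all you need, is that any $\beta$ meeting at least two edges of $G'$ involves at least two merge steps, hence $|s_\beta|\leq v+|\beta|-4$ and top degree $\leq c+2v-10<c+2v-6$. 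To see this, resolve $\beta$ one edge-class of $G'$ at a time: the first crossing of the first class is a merge by adequacy, and the first crossing of the second class is also a merge, because its vertex pair $\{\ell_3,\ell_4\}$ differs from the first pair $\{\ell_1,\ell_2\}$, so at least one of $\ell_3,\ell_4$ is still an untouched loop distinct from the loop carrying the other foot. With the bound corrected in this way, the classification of states contributing to $A^{c+2v-2}$ and $A^{c+2v-6}$ is complete, and your extraction of the two top coefficients (and, via $A\mapsto A^{-1}$, the two bottom ones) goes through.
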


Recall that a link $L$ is {\em almost alternating} if it is non-alternating and has an almost alternating diagram $D$, that is a diagram where one crossing change transforms $D$ into an alternating diagram. Figure \ref{figure:AlmostAlternating} shows a generic almost alternating diagram $D$. Label the four faces of the diagram $D$ incident to the almost alternating crossing by $u_1, u_2, v_1, v_2$. If there is a crossing inside of the alternating tangle $R$ incident to both $u_1$ and $u_2$, then a flype may be applied to the crossing to move it outside of $R$. Then the crossing can be cancelled with the almost alternating crossing via a Reidemeister 2 move, resulting in an alternating diagram. Similarly, if there is a crossing inside of $R$ incident to both $v_1$ and $v_2$, then the diagram can be transformed into an alternating diagram (see the proof of Corollary 4.5 in \cite{Adams:AlmostAlternating}). Therefore, if $L$ is almost alternating, then it has a diagram $D$ as in Figure \ref{figure:AlmostAlternating} where both $N(R)$ and $D(R)$ are reduced alternating diagrams.

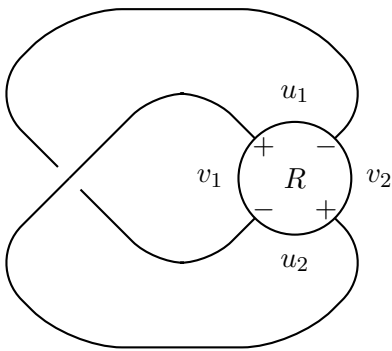
\begin{figure}[h]
$$\begin{tikzpicture}[scale=.75]

\draw[thick, rounded corners=4mm] (4,0) -- (2.5,1.5) -- (1.5,1.5) -- (0,0);
\draw[thick, rounded corners = 4mm] (4,0) -- (2.5,-1.5) -- (1.5,-1.5) -- (.2,-.2);
\draw[thick, rounded corners = 7mm] (4,0) -- (5.5,1.5) -- (4,3) -- (0,3) -- (-1.5,1.5) -- (-.2,.2);
\draw[thick, rounded corners = 7mm] (0,0) -- (-1.5,-1.5) -- (0,-3) -- (4,-3) -- (5.5,-1.5) -- (4,0);

\fill[white] (4,0) circle (1cm);
\draw[thick] (4,0) circle (1cm);
\draw (4,0) node{$R$};
\draw (3.45,.55) node{$+$};
\draw (4.55,.55) node{$-$};
\draw (4.55,-.55) node{$+$};
\draw (3.45,-.55)node{$-$};

\draw (4,1.5) node{$u_1$};
\draw (4,-1.5) node{$u_2$};
\draw (2.5,0) node{$v_1$};
\draw (5.5,0) node{$v_2$};

\end{tikzpicture}$$
\caption{An almost alternating diagram. The two-tangle $R$ is alternating.}
\label{figure:AlmostAlternating}
\end{figure}

Two faces $f_1$ and $f_2$ of a link diagram are {\em adjacent} if there exists a crossing incident to $f_1$ and $f_2$.  Let $\adj(u_1,u_2)$ be the number of faces of $D$ that are contained in $R$ and are adjacent to both $u_1$ and $u_2$, and let $\adj(v_1,v_2)$ be the number of faces of $D$ that are contained in $R$ and are adjacent to both $v_1$ and $v_2$. See Figure \ref{figure:adjacent}.

\begin{figure}[h]
$$\begin{tikzpicture}[thick,scale = .8, rounded corners = 3mm]

\draw (3,0) node{$u_2$};
\draw (3,4) node{$u_1$};

\fill[black!30!white] (1,3) -- (0.5,2.5) -- (.5,1.5) -- (1,1) -- (2,2) -- (1,3);
\fill[black!30!white] (2,2) -- (3,3) -- (3.5,2.5) -- (3.5,1.5) -- (3,1) -- (2,2);
\fill[black!30!white] (5,1) -- (5.5,1.5) -- (5.5,2.5) -- (5,3) -- (4.5,2.5) -- (4.5,1.5) -- (5,1);
\fill[black!30!white,rounded corners = 0mm] (.8,1.2) -- (1,1) -- (2,2) -- (1.8,2.2);
\fill[black!30!white,rounded corners = 0mm] (2.8,2.8) -- (3,3) -- (3.2,2.8);
\fill[black!30!white,rounded corners = 0mm] (2.8, 1.2) -- (3,1) -- (3.2,1.2);
\fill[black!30!white,rounded corners = 0mm] (4.8,2.8) -- (5,3) -- (5.2,2.8);
\draw (5.1, 3.1) -- (6,4) -- (6,5) -- (0,5) -- (0,4) -- (1.9,2.1);
\draw (2.9,2.9) -- (1.1,1.1);

\draw (.9,2.9) -- (0.5,2.5) --(.5,1.5)  -- (1.5,.5) -- (2.5,.5) -- (2.9,.9);
\draw (3.1,1.1) -- (3.5,1.5) -- (3.5,2.5) -- (2.5,3.5) -- (1.5,3.5) -- (1.1,3.1);
\draw (2.1,1.9) -- (3.5,.5) -- (4.5,.5) -- (4.9,.9);
\draw (5.1,1.1) -- (5.5,1.5) -- (5.5,2.5) -- (4.5,3.5) -- (3.5,3.5) -- (3.1,3.1);
\draw (.9,.9) -- (0,0) -- (0,-1) -- (6,-1) --(6,0) -- (4.5,1.5) -- (4.5,2.5) -- (4.9,2.9);

\begin{scope}[xshift = 9cm]

\draw (-.5,2) node{$v_1$};
\draw (6.5,2) node{$v_2$};

\draw (.9,.9) -- (0,0) -- (-1,0) -- (-1,4) -- (0,4) -- (1.9,2.1);
\draw (2.9,2.9) -- (1.1,1.1);

\draw (.9,2.9) -- (0.5,2.5) --(.5,1.5)  -- (1.5,.5) -- (2.5,.5) -- (2.9,.9);
\draw (3.1,1.1) -- (3.5,1.5) -- (3.5,2.5) -- (2.5,3.5) -- (1.5,3.5) -- (1.1,3.1);
\draw (2.1,1.9) -- (3.5,.5) -- (4.5,.5) -- (4.9,.9);
\draw (5.1,1.1) -- (5.5,1.5) -- (5.5,2.5) -- (4.5,3.5) -- (3.5,3.5) -- (3.1,3.1);
\draw (5.1,3.1) -- (6,4) -- (7,4) -- (7,0) -- (6,0) -- (4.5,1.5) -- (4.5,2.5) -- (4.9,2.9);

\end{scope}

\end{tikzpicture}$$

\caption{The numerator and denominator closures of an alternating tangle $R$. In this example, $\adj(u_1,u_2)=3$ while $\adj(v_1,v_2)=0$. Faces that are adjacent to both $u_1$ and $u_2$ are shaded.}
\label{figure:adjacent}
\end{figure}
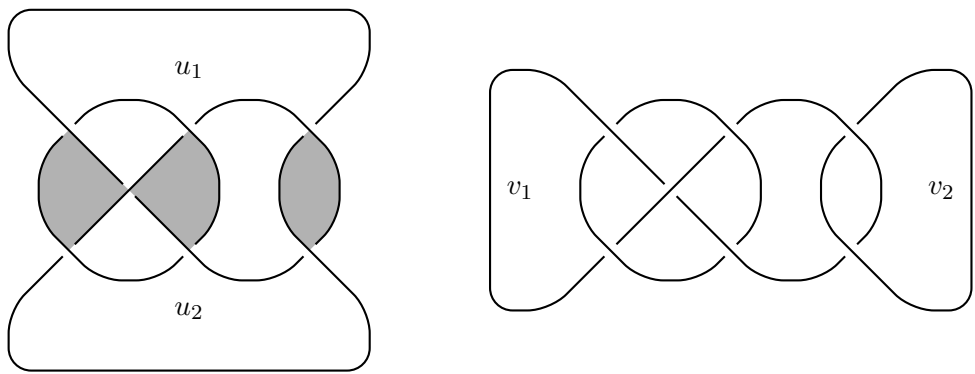

The following lemma shows that the first and last coefficients of the Kauffman bracket of an almost alternating diagram can be expressed in terms of $\adj(u_1,u_2)$ and $\adj(v_1,v_2)$.
\begin{lemma}
\label{lemma:penultimate}
Let $D$ be an almost alternating diagram as in Figure \ref{figure:AlmostAlternating}, and assume that both $N(R)$ and $D(R)$ are reduced alternating diagrams. Then for some integers $p$ and $k$,
$$\langle D \rangle = \sum_{i=0}^k \alpha_i A^{p-4i},$$
where $\alpha_0 = \pm(1-\adj(u_1,u_2))$ and $\alpha_k = \pm(1-\adj(v_1,v_2))$.
\end{lemma}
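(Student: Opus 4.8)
The plan is to resolve the almost alternating crossing with the Kauffman bracket skein relation and then feed the two resulting alternating diagrams into the Dasbach--Lin formula (Theorem \ref{theorem:DL}). Let $x$ denote the almost alternating crossing of Figure \ref{figure:AlmostAlternating}. Its two smoothings reconnect the ends of the tangle $R$ into the two tangle closures, so $\langle D\rangle = A\langle D_A\rangle + A^{-1}\langle D_B\rangle$, where $\{D_A,D_B\}=\{N(R),D(R)\}$ and $D_A$ is the $A$-smoothing. Since $N(R)$ and $D(R)$ are reduced alternating, Theorem \ref{theorem:DL} gives each of $\langle N(R)\rangle$ and $\langle D(R)\rangle$, and in particular their two highest and two lowest coefficients. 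I would then read off $\alpha_0$ and $\alpha_k$ by adding the contributions of $A\langle D_A\rangle$ and $A^{-1}\langle D_B\rangle$ in the extreme degrees.

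The crucial degree observation is that the two extreme terms overlap and cancel. The all-$A$ state of $D$ agrees with that of $D_A$ and the all-$B$ state of $D$ agrees with that of $D_B$, so $s_A(D)=s_A(D_A)$ and $s_B(D)=s_B(D_B)$; moreover changing $x$ produces an alternating diagram whose all-$A$ and all-$B$ states are those of $D_B$ and $D_A$. Combining the identity $s_A+s_B=c+2$ for the alternating diagrams $N(R)$ and $D(R)$ with the same identity for the alternating diagram obtained by changing $x$ forces $s_A(D_B)=s_A(D_A)+1$ and $s_B(D_A)=s_B(D_B)+1$. Hence the leading terms of $A\langle D_A\rangle$ and $A^{-1}\langle D_B\rangle$ occupy the same degree, and their Dasbach--Lin leading coefficients $(-1)^{s_A(D_A)-1}$ and $(-1)^{s_A(D_B)-1}=(-1)^{s_A(D_A)}$ cancel. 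Therefore $\alpha_0$ equals the sum of the two \emph{second} coefficients of Theorem \ref{theorem:DL}, which I can write as $\alpha_0=\pm\left((e_{D_A}-v_{D_A}+1)-(e_{D_B}-v_{D_B}+1)\right)$, the signed difference of the cycle ranks of the reduced all-$A$ state graphs $G'_{D_A}$ and $G'_{D_B}$. The mirror argument with the all-$B$ state graphs $\overline{G}'_{D_A}$ and $\overline{G}'_{D_B}$ expresses $\alpha_k$ as an analogous difference of cycle ranks.

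The combinatorial heart is to identify these cycle-rank differences with the adjacency numbers. The all-$A$ state graphs of $N(R)$ and $D(R)$ have the same edge set, one edge per crossing of $R$, and the same vertices coming from the interior of $R$; they differ only in that the numerator closure keeps $u_1$ and $u_2$ as distinct faces while the denominator closure merges them into a single outer face, as in Figure \ref{figure:adjacent}. Thus one graph is obtained from the other by identifying the two vertices corresponding to $u_1$ and $u_2$. By the flype reductions guaranteeing that no crossing of $R$ is incident to both $u_1$ and $u_2$, this identification creates no loops; for each face $f$ of $R$ adjacent to both $u_1$ and $u_2$ it merges the two edges $fu_1$ and $fu_2$ into one parallel class, and it does nothing else. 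There are exactly $\adj(u_1,u_2)$ such faces, so the identification lowers the reduced edge count by $\adj(u_1,u_2)$ and the vertex count by $1$; the two cycle ranks therefore differ by $1-\adj(u_1,u_2)$, giving $\alpha_0=\pm(1-\adj(u_1,u_2))$. The symmetric computation with $v_1,v_2$ and the all-$B$ graphs yields $\alpha_k=\pm(1-\adj(v_1,v_2))$.

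The main obstacle is the cancellation in the middle step: it is what promotes $\adj$ from a lower-order correction into the genuine leading coefficient, and it relies essentially on the almost alternating hypothesis through the loop-count identities $s_A(D_B)=s_A(D_A)+1$ and $s_B(D_A)=s_B(D_B)+1$. A secondary point needing care is matching labels, namely checking that the $A$-smoothing of $x$ is the one merging $u_1$ and $u_2$, so that $\alpha_0$ sees $\adj(u_1,u_2)$ and $\alpha_k$ sees $\adj(v_1,v_2)$ rather than the reverse; this is determined by the $+$ and $-$ decorations at $x$ in Figure \ref{figure:AlmostAlternating}.
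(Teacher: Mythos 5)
Your proposal is correct and follows essentially the same route as the paper: expand the bracket at the dealternator as $A\langle D(R)\rangle+A^{-1}\langle N(R)\rangle$, apply the Dasbach--Lin formula to both reduced alternating closures, note that the extreme terms land in the same degree and cancel because $v_N=v_D+1$ (equivalently your $s_A$ identity), and identify the surviving penultimate coefficient with $1-\adj(u_1,u_2)$ via the fact that $G_D$ is $G_N$ with the vertices $u_1,u_2$ identified, which drops the reduced edge count by exactly $\adj(u_1,u_2)$. The paper's proof is the same computation written out explicitly with the shifted Dasbach--Lin expansions.
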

\begin{proof}
Let $G_N, G_D$ be the all-$A$ state graphs of $N(R)$ and $D(R)$ respectively, and let $\overline{G}_N,$ and $\overline{G}_D$ be the all-$B$ state graphs of $N(R)$ and $D(R)$. Let $G_N'$, $G_D'$, $\overline{G}_N'$, and $\overline{G}_D'$ be these graphs after all multiple edges are replaced by a single edge. Let $v_N$ and $\overline{v}_N$ be the number of vertices of $G_N$ and $\overline{G}_N$ respectively. Let $e_N$ and $\overline{e}_N$ be the number of edges in $G_N'$ and $\overline{G}_N'$. Similarly define $v_D$, $\overline{v}_D$, $e_D$ and $\overline{e}_D$ using $D(R)$ in place of $N(R)$.

Since the graph $G_D$ is obtained from the graph $G_N$ by identifying the vertices corresponding to faces $u_1$ and $u_2$, we have $v_N = v_D +1$. Similarly, $\overline{v}_D = \overline{v}_N + 1$. The graphs $G_N$, $\overline{G}_N$, $G_D$, and $\overline{G}_D$ all have the same number of edges, the number of crossings in $R$. Suppose that $u_3$ is a face adjacent to $u_1$ and $u_2$. Let $e_1$ and $e_2$ be the corresponding edges in $G_N$. The edges $e_1$ and $e_2$ do not have the same endpoints in $G_N$, but since $u_1$ and $u_2$ are identified together to form $G_D$, the corresponding edges have the same endpoints in $G_D$. Therefore $e_N = e_D+\adj(u_1,u_2)$. Similarly, $\overline{e}_D= \overline{e}_N + \adj(v_1,v_2)$.

The Kauffman bracket of $D$ is given by $\langle D \rangle = A\langle D(R) \rangle + A^{-1}\langle N(R) \rangle$. Theorem \ref{theorem:DL} implies
\begin{align*}
A \langle D(R) \rangle = & \; (-1)^{v_D-1}A^{c+2v_D-1} + (-1)^{v_D-2}(e_D-v_D+1)A^{c+2v_D-5}\\
& \; + \cdots +(-1)^{\overline{v}_D+2}(\overline{e}_D - \overline{v}_D +1)A^{7-c-2\overline{v}_D} + (-1)^{\overline{v}_D+1}A^{3-c-2\overline{v}_D}~\text{and}\\
A^{-1}\langle N(R) \rangle =& \; (-1)^{v_N-1}A^{c+2v_N-3} + (-1)^{v_N-2}(e_N-v_N+1)A^{c+2v_N-7}\\
& \; + \cdots +(-1)^{\overline{v}_N+2}(\overline{e}_N - \overline{v}_N +1)A^{5-c-2\overline{v}_N} + (-1)^{\overline{v}_N+1}A^{1-c-2\overline{v}_N}\\
=& \; (-1)^{v_D} A^{c+2v_D-1} + (-1)^{v_D-1}(e_N -v_D+2)A^{c+2v_D-5}\\
& \; + \cdots + (-1)^{\overline{v}_D+1}(\overline{e}_N - \overline{v}_D)A^{7-c-2\overline{v}_D} + (-1)^{\overline{v}_D}A^{3-c-2\overline{v}_D}.
\end{align*}
Therefore, both of the coefficients of $A^{c+2v_D-1}$ and $A^{3-c-\overline{v}_D}$ in $\langle D \rangle$ are zero. Hence the greatest power of $A$ that potentially has nonzero coefficient is $A^{c+2v_D-5}$, and the least power of $A$ that potentially has nonzero coefficient is $A^{7-c-2\overline{v}_D}$. The coefficient of $A^{c+2v_D-5}$ is 
$$(-1)^{v_D}(e_D - e_N + 1) = (-1)^{v_D}(1 - \adj(u_1,u_2)).$$
Similarly, the coefficient of $A^{3-c-2\overline{v}_D}$ is 
$$(-1)^{\overline{v}_D-1}(\overline{e}_N - \overline{e}_D + 1) = (-1)^{\overline{v}_D-1}(1 - \adj(v_1,v_2)),$$
giving us the desired result.
\end{proof}

\begin{proof}[Proof of Theorem \ref{theorem:AlmostAlternating}]
Let $D$ be an almost alternating diagram of $L$ with the fewest number of crossings among all almost alternating diagrams of $L$. If either $N(R)$ or $D(R)$ is not reduced, then either there exists an almost alternating diagram of $L$ with fewer crossings or $L$ is an alternating link. If $L$ is alternating, then both $|a_m|$ and $|a_M|$ are 1 by a result of Kauffman \cite{Kauffman:Bracket}. 

Suppose that both $N(R)$ and $D(R)$ are reduced. It suffices to show that the trailing or leading coefficient of $\langle D \rangle$ is $\pm 1$. By Lemma \ref{lemma:penultimate}, if either $\adj(u_1,u_2)$ or $\adj(v_1,v_2)$ is $0$ or $2$, then the result is shown. Let $\Gamma$ and $\Gamma^*$ be the checkerboard graphs of $D$ such that $u_1$ and $u_2$ are vertices in $\Gamma$ and $v_1$ and $v_2$ are vertices in $\Gamma^*$. Suppose that $e_1$ and $e_2$ are the only two edges in a path between $u_1$ and $u_2$. Then any path between $v_1$ and $v_2$ must contain either the edge dual to $e_1$ or the edge dual to $e_2$. Hence the number of disjoint paths between $u_1$ and $u_2$ is a lower bound for the length of the shortest path between $v_1$ and $v_2$. Therefore if $\adj(u_1,u_2)\geq 3$, then $\adj(v_1,v_2) = 0$, and similarly if $\adj(v_1,v_2)\geq 3$, then $\adj(u_1,u_2)=0$.

Suppose $\adj(u_1,u_2)\neq 1$. Either $\adj(u_1,u_2) =0$ or $2$ and $|1-\adj(u_1,u_2)|=1$, or $\adj(u_1,u_2)\geq 3$ and $|1-\adj(v_1,v_2)| = |1 - 0| = 1$. By a similar argument, if $\adj(v_1,v_2)\neq 1$, then at least one of $|1-\adj(u_1,u_2)|$ or $|1-\adj(v_1,v_2)|$ is one. Thus the only case left to consider is $\adj(u_1,u_2)=\adj(v_1,v_2)=1$. If $\adj(u_1,u_2)=\adj(v_1,v_2)=1$, then $D$ has diagram as in Figure \ref{figure:adj1} where $R_1$, $R_2$ and $R_3$ are alternating tangles except $R_2$ and $R_3$ are allowed to have no crossings. Furthermore, if $\adj(u_1,u_2) = \adj(v_1,v_2) =1$, then $L$ has an almost alternating diagram with two fewer crossings than $D$ (as depicted in Figure \ref{figure:AAIsotope}), contradicting the minimality of $D$. Therefore, either $\adj(u_1,u_2)\neq 1$ or $\adj(v_1,v_2)\neq 1$, and the result is proven for almost alternating links.

If $L$ is a link with $g_T(L)=1$, then \cite{ArmLow:Turaev} implies that $L$ is mutant to an almost alternating link $L'$. Since mutation does not change the Jones polynomial, it follows that $V_L(t)=V_{L'}(t)$, and the result holds.
\begin{figure}[h]
$$\begin{tikzpicture}[scale=.8]

\draw (1.5,1.2) node{$u_1$};
\draw (-.6,-3.7) node{$u_2$};
\draw (-1,-1.7) node{$v_1$};
\draw (4.5,-1.7) node{$v_2$};

\draw[thick, rounded corners = 3mm] (-.4,-.7) -- (-.4,-1.3) -- (.4,-2.1) -- (.4,-3);
\draw[thick, rounded corners = 3mm] (.4,-.7) -- (.4,-1.3) -- (.1,-1.6);
\draw[thick, rounded corners = 3mm] (-.4,-3) -- (-.4, -2.1) -- (-.1,-1.8);

\draw[thick, rounded corners = 3mm] (.7,.4) -- (1.3,.4) -- (2.1,-.4) -- (3,-.4);
\draw[thick, rounded corners = 3mm] (.7,-.4) -- (1.3, -.4) -- (1.6,-.1);
\draw[thick, rounded corners = 3mm] (3,.4) -- (2.1,.4) -- (1.8,.1);

\draw[thick] (-.5,-2.1) ellipse (2cm and 2.1cm);

\draw[thick, rounded corners = 2mm] (0,-3) -- (.4,-3.4) -- (.7,-3.6);
\draw[thick] (3.2,0) arc (45:-120:1.9 cm and 2.4cm);
\draw[thick] (-3,.9) arc (180:-30:3.45cm and 1cm);
\draw[thick] (-3,.9) arc (180:210: 4cm);
\draw[thick] (-.1,-3.2) arc(-90:-160:2.3cm);

\fill[white] (0,0) circle (1cm);
\draw[thick] (0,0) circle (1cm);
\draw (0,0) node{$R_1$};
\draw (-1,-.1) node[right]{\tiny{+}};
\draw (-.35,-1) node[above]{\tiny{-}};
\draw (.3,-1) node[above]{\tiny{+}};
\draw (.65,-.55) node{\tiny{-}};
\draw (.75,-.3) node{\tiny{+}};
\draw (.75,.35) node{\tiny{-}};

\fill[white](3,0) circle (.6 cm);
\draw[thick] (3,0) circle (.6cm);
\draw (3,0) node {$R_2$};
\draw (2.4,-.3) node[right]{\tiny{-}};
\draw (2.4,.3) node[right]{\tiny{+}};
\draw (3.6,-.3) node[left]{\tiny{+}};
\draw (3.6,.3) node[left]{\tiny{-}};

\fill[white](0,-3) circle (.6cm);
\draw[thick] (0,-3) circle (.6cm);
\draw (0,-3) node{$R_3$};
\draw (-.3,-2.4) node[below]{\tiny{+}};
\draw (.3,-2.4) node[below]{\tiny{-}};
\draw (-.3, -3.5) node[above]{\tiny{-}};
\draw (.3, -3.6) node[above]{\tiny{+}};

\end{tikzpicture}$$
\caption{If $\adj(u_1,u_2) = \adj(v_1,v_2)=1$, then $D$ has the above diagram.}
\label{figure:adj1}
\end{figure}
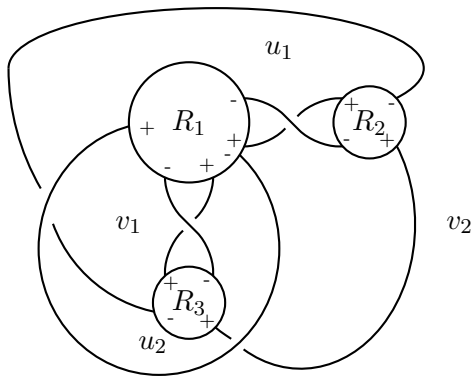

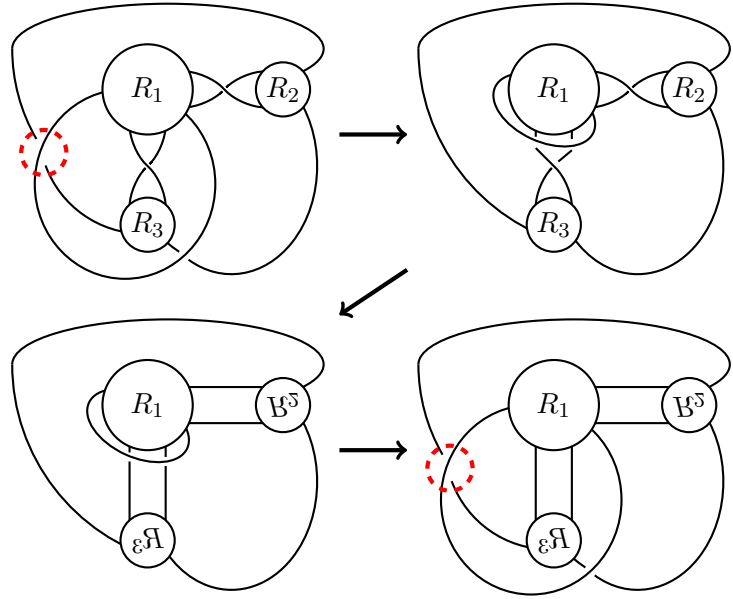
\begin{figure}[h]
$$\begin{tikzpicture}[scale = .6]

\draw[thick, rounded corners = 3mm] (-.4,-.7) -- (-.4,-1.3) -- (.4,-2.1) -- (.4,-3);
\draw[thick, rounded corners = 3mm] (.4,-.7) -- (.4,-1.3) -- (.1,-1.6);
\draw[thick, rounded corners = 3mm] (-.4,-3) -- (-.4, -2.1) -- (-.1,-1.8);

\draw[thick, rounded corners = 3mm] (.7,.4) -- (1.3,.4) -- (2.1,-.4) -- (3,-.4);
\draw[thick, rounded corners = 3mm] (.7,-.4) -- (1.3, -.4) -- (1.6,-.1);
\draw[thick, rounded corners = 3mm] (3,.4) -- (2.1,.4) -- (1.8,.1);

\draw[thick] (-.5,-2.1) ellipse (2cm and 2.1cm);

\draw[thick, rounded corners = 2mm] (0,-3) -- (.4,-3.4) -- (.7,-3.6);
\draw[thick] (3.2,0) arc (45:-120:1.9 cm and 2.4cm);
\draw[thick] (-3,.9) arc (180:-30:3.45cm and 1cm);
\draw[thick] (-3,.9) arc (180:210: 4cm);
\draw[thick] (-.1,-3.2) arc(-90:-160:2.3cm);

\fill[white] (0,0) circle (1cm);
\draw[thick] (0,0) circle (1cm);
\draw (0,0) node{$R_1$};

\fill[white](3,0) circle (.6 cm);
\draw[thick] (3,0) circle (.6cm);
\draw (3,0) node {$R_2$};

\fill[white](0,-3) circle (.6cm);
\draw[thick] (0,-3) circle (.6cm);
\draw (0,-3) node{$R_3$};

\draw[ultra thick, red, dashed] (-2.3,-1.4) circle (.5cm);

\begin{scope}[xshift = 9cm]
\draw[thick] (-.4,-.7) -- (-.4,-1.05);
\draw[thick] (.4,-.7) -- (.4,-1.1);
\draw[thick, rounded corners = 3mm] (-.4,-1.3) -- (.4,-2.1) -- (.4,-3);
\draw[thick, rounded corners = 3mm]  (.4,-1.35) -- (.1,-1.6);
\draw[thick, rounded corners = 3mm] (-.4,-3) -- (-.4, -2.1) -- (-.1,-1.8);

\draw[thick, rounded corners = 3mm] (.7,.4) -- (1.3,.4) -- (2.1,-.4) -- (3,-.4);
\draw[thick, rounded corners = 3mm] (.7,-.4) -- (1.3, -.4) -- (1.6,-.1);
\draw[thick, rounded corners = 3mm] (3,.4) -- (2.1,.4) -- (1.8,.1);

\draw[thick,rotate = -25] (0,-.5) ellipse (1.2cm and .7cm);

\draw[thick] (3.2,0) arc (45:-150:1.9 cm and 2.4cm);
\draw[thick] (-3,.9) arc (180:-30:3.45cm and 1cm);
\draw[thick] (-3,.9) arc (180:255: 4.5cm);

\fill[white] (0,0) circle (1cm);
\draw[thick] (0,0) circle (1cm);
\draw (0,0) node{$R_1$};

\fill[white](3,0) circle (.6 cm);
\draw[thick] (3,0) circle (.6cm);
\draw (3,0) node {$R_2$};

\fill[white](0,-3) circle (.6cm);
\draw[thick] (0,-3) circle (.6cm);
\draw (0,-3) node{$R_3$};

\end{scope}

\begin{scope}[yshift = -7cm]

\draw[thick] (-.4,-.7) -- (-.4,-1.05);
\draw[thick] (-.4,-1.25) --  (-.4,-3);
\draw[thick] (.4,-.7) -- (.4,-1.15);
\draw[thick] (.4,-1.35) -- (.4,-3);

\draw[thick] (.7,.4) -- (3,.4);
\draw[thick] (.7,-.4) -- (3,-.4);

\draw[thick,rotate = -25] (0,-.5) ellipse (1.2cm and .7cm);

\draw[thick] (3.2,0) arc (45:-150:1.9 cm and 2.4cm);
\draw[thick] (-3,.9) arc (180:-30:3.45cm and 1cm);
\draw[thick] (-3,.9) arc (180:255: 4.5cm);

\fill[white] (0,0) circle (1cm);
\draw[thick] (0,0) circle (1cm);
\draw (0,0) node{$R_1$};

\fill[white](3,0) circle (.6 cm);
\draw[thick] (3,0) circle (.6cm);
\draw (3,0) node {$\scalebox{1}[-1]{$R_2$}$};

\fill[white](0,-3) circle (.6cm);
\draw[thick] (0,-3) circle (.6cm);
\draw (0,-3) node{$\scalebox{-1}[1]{$R_3$}$};

\end{scope}

\begin{scope}[xshift = 9cm, yshift = -7cm]

\draw[thick] (-.4,-.7) -- (-.4,-3);
\draw[thick] (.4,-.7) -- (.4,-3);

\draw[thick] (.7,.4) -- (3,.4);
\draw[thick] (.7,-.4) -- (3,-.4);

\draw[thick] (-.5,-2.1) ellipse (2cm and 2.1cm);

\draw[thick, rounded corners = 2mm] (0,-3) -- (.4,-3.4) -- (.7,-3.6);
\draw[thick] (3.2,0) arc (45:-120:1.9 cm and 2.4cm);
\draw[thick] (-3,.9) arc (180:-30:3.45cm and 1cm);
\draw[thick] (-3,.9) arc (180:210: 4cm);
\draw[thick] (-.1,-3.2) arc(-90:-160:2.3cm);

\fill[white] (0,0) circle (1cm);
\draw[thick] (0,0) circle (1cm);
\draw (0,0) node{$R_1$};

\fill[white](3,0) circle (.6 cm);
\draw[thick] (3,0) circle (.6cm);
\draw (3,0) node {$\scalebox{1}[-1]{$R_2$}$};

\fill[white](0,-3) circle (.6cm);
\draw[thick] (0,-3) circle (.6cm);
\draw (0,-3) node{$\scalebox{-1}[1]{$R_3$}$};

\draw[ultra thick, red, dashed] (-2.3,-1.4) circle (.5cm);
\end{scope}

\draw[ultra thick, ->] (4.25,-1) -- (5.75,-1);
\draw[ultra thick, ->] (4.25,-8) -- (5.75,-8);
\draw[ultra thick, ->] (5.75, -4) -- (4.25, -5);

\end{tikzpicture}$$
\caption{If $\adj(u_1,u_2) = \adj(v_1,v_2)=1$, then $D$ is isotopic to another almost alternating $D'$ with two fewer crossings. The crossings contained in the dashed red circles are the almost alternating crossings.}
\label{figure:AAIsotope}
\end{figure}

\end{proof}

Adams et. al. \cite{Adams:AlmostAlternating} extended the notion of almost alternating as follows. The {\em dealternating number} $\dalt(D)$ of a link diagram $D$ is the minimum number of crossing changes necessary to transform the diagram $D$ into an alternating diagram. The {\em dealternating number} $\dalt(L)$ of the link $L$ is the minimum of $\dalt(D)$ over all diagrams $D$ of $L$. A link $L$ is almost alternating if and only if $\dalt(L)=1$. Theorem \ref{theorem:AlmostAlternating} implies that if the first and last coefficients $a_m$ and $a_M$ of the Jones polynomial of $L$ are both two or greater in absolute value, then $g_T(L)\geq 2$ and $\dalt(L)\geq 2$.

\begin{example}
\label{example:11n95}
The knot $K=11n_{95}$ in Figure \ref{figure:11n95} has Jones polynomial
$$V_K(t) =  2t^2 - 3t^3 + 5t^4 - 6t^5 + 6t^6 - 5t^7 + 4t^8 - 2t^9.$$
Theorem \ref{theorem:AlmostAlternating} implies that $g_T(11n_{95})\geq 2$ and $\dalt(11n_{95})\geq 2$. Figure \ref{figure:11n95} gives diagrams of $11n_{95}$ of Turaev genus and dealternating number two. Thus $g_T(11n_{95})=\dalt(11n_{95})=2$.
\end{example}

Kawauchi \cite{Kawauchi:Alternation} defined the {\em alternation number} $\alt(L)$ of a link $L$ to be the Gordian distance from $L$ to the set of alternating links. In other words, for a link diagram $D$, define $\alt(D)$ to be the minimum number of crossings changes necessary to transform $D$ into a (possibly non-alternating) diagram of an alternating link. Then define $\alt(L)$ to be the minimum $\alt(D)$ over all diagrams $D$ of $L$. Figure \ref{figure:11n95} shows that Theorem \ref{theorem:AlmostAlternating} does not extend to alternation number one links. If the crossing marked in the upper left diagram in Figure \ref{figure:11n95} is changed, then the resulting diagram is a trefoil. Thus $\alt(11n_{95}) =1$.

\begin{figure}[h]
$$\begin{tikzpicture}[scale = .8, thick]

\begin{scope}[rounded corners = 2mm]
\draw (4,2.1) -- (4,2.9);
\draw (3.1,3) -- (4.5,3) -- (5.5,2) -- (5.5,1.5)  -- (5,1) -- (3.6,1);
\draw (4.9,2.4) -- (4.5,2) -- (2.5,2) -- (2.5,1.6);
\draw (3,2.1) -- (3,3.5) -- (3.4,3.9);
\draw (4,3.1) -- (4,3.5) -- (3,4.5) -- (1.5,4.5) -- (.5,3.5) -- (.5,3) -- (1.4,3);
\draw (3.6,4.1) -- (4,4.5) -- (4.5,4.5)-- (5.5,3.5)--(5.5,3)--(5.1,2.6);
\draw (2.9,3) -- (1.6,3);
\draw (1.5,1.6) -- (1.5,3.5) -- (1.1,3.9);
\draw (3,1.9) -- (3,1.5) -- (0,1.5) -- (0,4.5) -- (.5,4.5) -- (.9,4.1);
\draw (1.5,1.4) -- (1.5,.5) -- (3,.5) -- (4,1.5) -- (4,1.9);
\draw (2.5,1.4) -- (2.5,1) -- (3.35,1);

\draw[very thick, red] (1,4) circle (.25cm);
\end{scope}

\begin{scope}[xshift = 8cm, rounded corners = 2mm]
\draw (3.1,3) -- (3.5,3) -- (5,1.5) -- (5,1) -- (3.6,1);
\draw (3.9,2.4) -- (3.5,2) -- (2.5,2) -- (2.5,1.6);
\draw (3,2.1) -- (3,3.5) -- (3.4,3.9);
\draw (4.4,3.1) -- (4,3.5) -- (3,4.5) -- (1.5,4.5) -- (.5,3.5) -- (.5,3) -- (1.4,3);
\draw (3.6,4.1) -- (4,4.5) -- (5,3.5) -- (4.1,2.6);
\draw (2.9,3) -- (1.6,3);
\draw (1.5,1.6) -- (1.5,3.5) -- (1.1,3.9);
\draw (3,1.9) -- (3,1.5) -- (0,1.5) -- (0,4.5) -- (.5,4.5) -- (.9,4.1);
\draw (1.5,1.4) -- (1.5,.5) -- (3,.5) -- (4,1.5) -- (4.4,1.9);
\draw (2.5,1.4) -- (2.5,1) -- (3.35,1);
\draw (4.6,2.9) -- (5,2.5) -- (4.6,2.1);
\draw [very thick, red] (2.1,3.2) -- (2.1,.8) -- (2.6,1) -- (3.2,1.3) -- (4.5,2.5) -- (5.2,2.5) -- (5.2,3.5) --  (3.9,4.8) -- (2.1,4.8) -- (2.1,3.1);

\end{scope}

\begin{scope}[xshift = 4cm, yshift = -5.5cm,rounded corners=2mm]
\draw (3.1,3) -- (3.5,3) -- (5,1.5) -- (5,1) -- (3.6,1);
\draw (3.9,2.4) -- (3.5,2) -- (2.5,2) -- (2.5,1.6);
\draw (3,2.1) -- (3,3.5) -- (3.4,3.9);
\draw (1.4,3) -- (.5,3) -- (.5,3.5) -- (1.5,4.5) -- (2,4) -- (2, 3.1);
\draw (2,2.9) -- (2,1.6);
\draw (2,1.4) -- (2,.8) --  (2.5,1);
\draw (2.7,1.1) -- (3.2,1.3) -- (4.15,2.15);
\draw (4.35,2.35) -- (4.5,2.5) -- (4.8,2.5);
\draw (4.4,3.1) -- (3,4.5) -- (3.5,5) -- (4,5) -- (5.5,3.5) -- (5.5,2.5) -- (5,2.5);
\draw (3.6,4.1) -- (4,4.5) -- (5,3.5) -- (4.1,2.6);
\draw (2.9,3) -- (1.6,3);
\draw (1.5,1.6) -- (1.5,3.5) -- (1.1,3.9);
\draw (3,1.9) -- (3,1.5) -- (0,1.5) -- (0,4.5) -- (.5,4.5) -- (.9,4.1);
\draw (1.5,1.4) -- (1.5,.5) -- (3,.5) -- (4,1.5) -- (4.4,1.9);
\draw (2.5,1.4) -- (2.5,1) -- (3.35,1);
\draw (4.6,2.9) -- (5,2.5) -- (4.6,2.1);

\draw [very thick, red] (2,1.5) circle (.25cm);
\draw [very thick, red] (4.25,2.25) circle (.25cm);

\end{scope}

\end{tikzpicture}$$
\caption{On the upper left is the standard diagram of $11n_{95}$.  The Turaev surface of this diagram has genus three, and if the encircled crossing is changed, then the resulting knot is a trefoil. Performing a Reidemeister 3 move yields the diagram on the upper right, whose Turaev surface has genus two. If a strand of $11n_{95}$ is pulled beneath the encircled alternating tangle in the upper right, then the resulting diagram is shown on the bottom. This diagram has dealternating number two. }
\label{figure:11n95}
\end{figure}
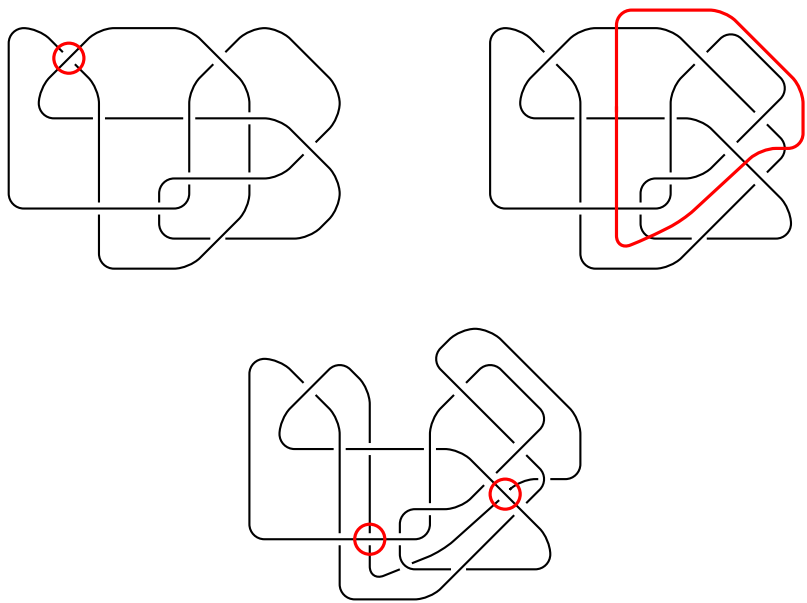

Jablan \cite{Jablan:Turaev} (together with unpublished work of Joshua Howie) showed that all knots with twelve or fewer crossings have Turaev genus and dealternating number at most two. For knots with eleven crossings, all but $11n_{95}$ and $11n_{118}$ are known to be Turaev genus one and almost alternating. Example \ref{example:11n95} shows that $g_T(11n_{95})=\dalt(11n_{95})=2$.  Among all knots with 12 crossings, there are 35 whose Turaev genus and dealternating number are unknown. Theorem \ref{theorem:AlmostAlternating} implies that the eleven knots in Table \ref{table:12KnotsJones} have Turaev genus and dealternating number two. 

\begin{table}[h]
\begin{tabular}{| l | l | }
\hline
Name & $V_K(t)$\\
\hline
\hline
$12n_{253}$ & $-2t^{-8}+ 4t^{-7}-7t^{-6}+ 9t^{-5}-9t^{-4}+ 10t^{-3}-7t^{-2}+ 5t^{-1}-2$\\
\hline
$12n_{254}$ & $3t^2-5t^3+ 9t^4-11t^5+ 11t^6-11t^7+ 8t^8-5t^9+ 2t^{10}$ \\
\hline
$12n_{280}$ & $2t^{-1}-4+ 7t-8t^2+ 9t^3-9t^4+ 6t^5-4t^6+ 2t^7$\\
\hline
$12n_{323}$ & $-2t^{-5}+ 4t^{-4}-6t^{-3}+ 9t^{-2}-9t^{-1}+ 9-7t+ 5t^2-2t^3$\\
\hline
$12n_{356}$ & $2t^{-4}-5t^{-3}+ 8t^{-2}-10t^{-1}+ 11-10t+ 8t^2-5t^3+ 2t^4$\\
\hline
$12n_{375}$ & $2t^2-4t^3+ 8t^4-9t^5+ 10t^6-10t^7+ 7t^8-5t^9+ 2t^{10}$\\
\hline
$12n_{452}$ & $2t^{-1}-4+ 7t-9t^2+ 10t^3-9t^4+ 7t^5-5t^6+ 2t^7$\\
\hline
$12n_{706}$ & $2t^{-4}-4t^{-3}+ 6t^{-2}-8t^{-1}+ 9-8t+ 6t^2-4t^3+ 2t^4$ \\
\hline
$12n_{729}$ & $3t^2-6t^3+ 10t^4-12t^5+ 13t^6-12t^7+ 9t^8-6t^9+ 2t^{10}$\\
\hline
$12n_{811}$ & $-2+ 6t-8t^2+ 11t^3-11t^4+ 10t^5-8t^6+ 5t^7-2t^8$\\
\hline
$12n_{873}$ & $3t^{-4}-7t^{-3}+ 11t^{-2}-14t^{-1}+ 15-14t+ 11t^2-7t^3+ 3t^4$\\
\hline
\end{tabular}
\vspace{5pt}
\caption{Knots with twelve crossings appearing in the KnotInfo database \cite{KnotInfo} that Theorem \ref{theorem:AlmostAlternating} implies have $g_T(K)>1$ and $\dalt(K)>1$. Work of Jablan \cite{Jablan:Turaev} and Howie shows that for each of these knots $K$, we have $g_T(K)=\dalt(K)=2$. }
\label{table:12KnotsJones}
\end{table}

\clearpage
\bibliography{sigbib}{}
\bibliographystyle {amsalpha}
\end{document}